\numberwithin{equation}{section}
\theoremstyle{plain}\newtheorem{theorem}{Theorem}[section]\newtheorem{lemma}[theorem]{Lemma}
\newtheorem{corollary}[theorem]{Corollary}\theoremstyle{definition}\newtheorem{remark}[theorem]{Remark}\DeclareMathOperator{\mdim}{\mathrm{mdim}}
\DeclareMathOperator{\Widim}{\mathrm{Widim}}\DeclareMathOperator{\Map}{\mathrm{Map}}\DeclareMathOperator{\Sym}{\mathrm{Sym}}\makeatother
\begin{document}
\title[Sofic approximation sequences and sofic mean dimension]{Sofic approximation sequences and\\sofic mean dimension}
\author[Lei Jin]{Lei Jin}
\address{Lei Jin: School of Mathematics, Sun Yat-sen University, Guangzhou, China}
\email{jinleim@mail.ustc.edu.cn}
%\email{jinleim@impan.pl}
\author[Yixiao Qiao]{Yixiao Qiao}
\address{Yixiao Qiao (Corresponding author): School of Mathematics and Statistics, Guangdong University of Technology, Guangzhou, China}
\email{yxqiao@mail.ustc.edu.cn}
\subjclass[2010]{37B99; 54F45.}\keywords{Sofic mean dimension; Sofic approximation sequence; Amenable group action; Finite group action; Full shift.}
\begin{abstract}
The main purpose of this paper is to strengthen our understanding of sofic mean dimension of two typical classes of sofic group actions. First, we study finite group actions. We prove that sofic mean dimension of any amenable group action does not depend on the choice of sofic approximation sequences. Previously, this result was known only if the acting group is an infinite amenable group. Moreover, we investigate the full shifts, for all sofic groups and all alphabets. We show that sofic mean dimension of any full shift depends purely on its alphabet. Our method is a refinement of the classical technique in relation to the estimates from above and below, respectively, for mean dimension of some typical actions. The key point of our results is that they apply to all compact metrizable spaces without any restriction (in particular, the alphabet concerned in a full shift and the space involved in a finite group action are not required to be finite-dimensional).

Furthermore, we improve the quantitative knowledge of sofic mean dimension, restricted to finite-dimensional compact metrizable spaces, for those two typical classes of sofic group actions. As a direct consequence of the main ingredient of our proof, we obtain the exact value of sofic mean dimension of all the actions of finite groups on finite-dimensional compact metrizable spaces. Previously, only an upper bound for these actions was given. Besides, we also get the exact value of sofic mean dimension of full shifts when the alphabet is finite-dimensional.
\end{abstract}
\maketitle

\medskip

\section{Introduction}
\subsection{Main results}
Mean dimension is a topological invariant of dynamical systems. It originates with Misha Gromov \cite{Gromov2} in 1999 and was investigated systematically by Elon Lindenstrauss and Benjamin Weiss \cite{LW} around 2000 within the framework of amenable group actions. Mean dimension has now proved to achieve remarkable success in the study of dynamical systems. To make it reach to a much broader class of group actions, Hanfeng Li \cite{Li} introduced the notion of \textit{sofic mean dimension} from a fairly new perspective in 2013. This is an excellent generalization of the classical version of mean dimension, which enables it to apply to all sofic group actions. The sofic groups, that appeared initially with the article \cite{Gromov1} by Gromov in 1999, do form a rather extensive family of groups, and contain in particular all the amenable groups and all the residually finite groups.\footnote{Actually, it has not yet been verified if there exists a countable group which is not sofic.} Along with the definition of sofic mean dimension, Li \cite{Li} successfully built its connection, as we expected, with the classical version of mean dimension from his modern point of view.

To state Li's result precisely, we put necessary conventions here very briefly. The detailed description of our terminology can be found in Section 2. Throughout this paper, all the acting groups are always assumed to be \textit{countable} (i.e., either finite or countably infinite) and \textit{discrete}. If a sofic group $G$ acts continuously on a compact metrizable space $X$ and if $\Sigma$ is a sofic approximation sequence for $G$, then its sofic mean dimension is denoted by $\mdim_\Sigma(X,G)$. Formally, it is called the sofic mean dimension of $(X,G)$ with respect to the sofic approximation sequence $\Sigma$ for the acting group $G$. We notice that the notation for mean dimension (i.e., the classical version of mean dimension) of amenable group actions is not involved in this paper.

Li \cite[Section 3]{Li} showed that if an \textit{infinite} amenable group $G$ acts continuously on a compact metrizable space $X$ and if $\Sigma$ is a sofic approximation sequence for $G$, then $\mdim_\Sigma(X,G)$ is equal to its mean dimension. This is a highly satisfactory bridge between these two levels. However, a finite group action may be\footnote{Note that automatically, any finite group is amenable.} an exception. In fact, for \textit{double finite actions} $(X,G)$ (namely, a \textit{finite} group $G$ acts continuously on a \textit{finite-dimensional} compact metrizable space $X$), Li \cite[Section 3]{Li} provided an upper bound for $\mdim_\Sigma(X,G)$, which is finer than the mean dimension of $(X,G)$ and which leads immediately to a bunch of examples of $(X,G)$ such that the sofic mean dimension $\mdim_\Sigma(X,G)$ and the mean dimension of $(X,G)$ do not coincide.

We remark here that the sofic mean dimension of any double finite action remained unclear at that time. Recently, we finally settled this question in a previous version of the present paper, obtaining the \textit{exact} value of sofic mean dimension for all the double finite actions (as specified now in Subsection 1.3 of the current version). Nowadays we prefer to treat it as a corollary of the main proposition of this paper rather than to publish our original proof with that early manuscript elsewhere.

Moreover, as a direct consequence, this answer to the above question gives an ``if and only if'' condition under which such a pleasant equality (connecting those two levels) for double finite actions becomes true or false.

Even so, the known results in this direction (e.g. sofic mean dimension of finite group actions) still do not depart from the usual assumption that the space is additionally required to be finite-dimensional. Indeed, as we will see in a moment, general and sharp statements about sofic mean dimension of full shifts are also restricted to finite-dimensional alphabets (i.e., compact metrizable spaces) only. There is a lack of knowledge of such assertions for \textit{infinite-dimensional} compact metrizable spaces. The reason behind this restriction will be explained at length in subsequent subsections.

Nonetheless, let us turn to a natural issue which is unknown by reason of the same obstacle, too, but which we have been able to address. Although it seems somewhat unfortunate that we \textit{cannot} always expect an amenable group action to have sofic mean dimension agreeing with its mean dimension, a further question that is worth studying is to find some slightly weaker property that could encompass \textit{all} the amenable group actions. As mentioned previously, actions of infinite amenable groups and double finite actions have sofic mean dimension already very clear to us. In particular, this implies that the value $\mdim_\Sigma(X,G)$, for any action $(X,G)$ among them, is independent of the sofic approximation sequences $\Sigma$ for $G$. Nevertheless, this is not confirmed when the acting group $G$ is finite and the space $X$ is infinite-dimensional. We solve this problem. It is quite reasonable to expect all the amenable group actions to possess such a more essential (and more abstract) property. The first main result of this paper is to establish this statement.

\begin{theorem}[Main theorem 1]\label{main1}
Sofic mean dimension of any amenable group action does not depend on the choice of sofic approximation sequences.
\end{theorem}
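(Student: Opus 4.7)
The plan is to reduce to the only unsettled case: a finite group $G$ acting on a possibly infinite-dimensional compact metrizable space $X$. Indeed, Li's theorem \cite[Section 3]{Li} gives $\mdim_\Sigma(X,G) = \mdim(X,G)$ whenever $G$ is infinite amenable, and the authors' previous paper \cite{JQsofic} computes $\mdim_\Sigma(X,G)$ explicitly for every double finite action, so in both cases the outcome manifestly depends only on $(X,G)$ and not on $\Sigma$. From now on I fix a finite group $G$, a compact metrizable space $X$ (with $\dim X$ permitted to be $\infty$), and two sofic approximation sequences $\Sigma, \Sigma'$ for $G$; the goal is $\mdim_\Sigma(X,G) = \mdim_{\Sigma'}(X,G)$.

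The central observation is that, for a finite group $G$ and any sofic approximation $\sigma_i \colon G \to \Sym(d_i)$, the orbit structure of the family $\{\sigma_i(g)\}_{g\in G}$ is asymptotically canonical: approximate multiplicativity and approximate freeness together produce a $\sigma_i(G)$-invariant subset $J_i\subseteq\{1,\ldots,d_i\}$, with relative density $|J_i|/d_i\to1$, on which $\sigma_i(G)$ acts as a genuine \emph{free} $G$-action. Hence every element of $\Map(\rho,F,\delta,\sigma_i)$ is, up to a controllable error, determined by its values at one representative per orbit in $J_i$, producing a near-identification of $\Map(\rho,F,\delta,\sigma_i)$ with $X^{|J_i|/|G|}$.

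Building on this, I would establish matching upper and lower bounds for $\Widim_\epsilon(\Map(\rho,F,\delta,\sigma_i),\rho_\infty)$ at each scale $\epsilon>0$, both phrased purely in terms of $\Widim_\epsilon$ of finite Cartesian powers of $X$ and the integer $|G|$. The upper bound would come from an orbit-coarsening argument: any approximately equivariant map can be perturbed, within error $\epsilon$, to one that is honestly equivariant on $J_i$, which bounds $\Widim_\epsilon(\Map(\rho,F,\delta,\sigma_i),\rho_\infty)$ above by $\Widim_\epsilon(X^{\lfloor|J_i|/|G|\rfloor})$ plus an $o(d_i)$ term absorbing the unconstrained contribution from the complement of $J_i$. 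The lower bound would come by explicitly embedding $X^m$ into $\Map(\rho,F,\delta,\sigma_i)$ for each $m\leq|J_i|/|G|$, sending a tuple to the equivariant extension along the free orbits of $J_i$ (with an arbitrary fixed value on the defect indices). Dividing by $d_i$, taking $\limsup_i$, and then $\epsilon\to0^+$ removes both $\Sigma$ and $i$, leaving a value depending only on $(X,G)$.

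The hard part will be making the two estimates quantitatively robust when $\dim X=\infty$. The classical finite-dimensional arguments lean on $\Widim_\epsilon(X^m)\leq m\dim X$ and on polyhedral embeddings, both of which collapse in infinite dimension. The refinement is to keep $\epsilon$ fixed throughout the orbit-by-orbit reduction, so that only the scale-$\epsilon$ widths $\Widim_\epsilon(X^m)$ enter, and to handle the defect set $\{1,\ldots,d_i\}\setminus J_i$ not through a dimension-type bound but through an $\epsilon$-perturbation whose per-coordinate cost is $o(1)$ as $i\to\infty$. Since the resulting $\epsilon$-scale expression is manifestly independent of $\Sigma$ and valid for arbitrary compact metrizable $X$, the theorem follows upon passing $\epsilon\to0^+$.
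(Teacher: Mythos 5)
Your overall architecture matches the paper's: reduce to a finite group $G$, identify $\mdim_\Sigma(X,G)$ with the $\Sigma$-free quantity $\frac{1}{|G|}\lim_{\epsilon\to0}\lim_{n\to\infty}\Widim_\epsilon(X^n,\rho_\infty)/n$ via a free-orbit decomposition of $[d_i]$, and keep everything at a fixed scale $\epsilon$ so that no finite-dimensionality of $X$ is ever invoked. Your lower bound (extend a tuple of orbit representatives equivariantly along the free orbits, pad the defect indices arbitrarily, and check the resulting map of $X^{m}$ into $\Map(\rho,G,\delta,\sigma_i)$ is distance-increasing) is sound and is essentially the paper's estimate from below.

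The gap is in your upper bound, specifically the step ``any approximately equivariant map can be perturbed, within error $\epsilon$, to one that is honestly equivariant on $J_i$.'' This is false. Membership in $\Map(\rho,G,\delta,\sigma_i)$ only imposes the $\ell^2$-averaged condition $\rho_2(\phi\circ\sigma_i(s),s\phi)\le\delta$, so an element $\phi$ may violate equivariance by an amount comparable to $\mathrm{diam}(X)$ on a $\phi$-dependent subset of $J_i$ of size up to about $|G|\delta d_i$; at such an index no perturbation of size $\epsilon$ restores equivariance, and the obstruction sits inside $J_i$, not in the $\phi$-independent complement $[d_i]\setminus J_i$ that your last paragraph proposes to absorb. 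Worse, this bad set varies discontinuously with $\phi$, so you cannot continuously switch between ``record one representative per orbit'' and ``record $\phi_j$ in full'' by a case split, which is what an upper bound on $\Widim_\epsilon$ requires (a single continuous $\epsilon$-embedding of all of $\Map(\rho,G,\delta,\sigma_i)$). The paper resolves exactly this by introducing the continuous defect functions $J_j(\phi)=\max\bigl\{\max_{g\in G}\bigl(\rho(g\phi_j,\phi_{\sigma_i(g)(j)})-\sqrt{\delta_0}\bigr),0\bigr\}$ and mapping into $Y_{|C_i|}\times(CY_1)^{d_i}\times Y_1^{|[d_i]\setminus\sigma_i(G)(C_i)|}$, where $CY_1$ is the cone over the image of an $\eta$-embedding of $X$: the coordinate $J_j(\phi)f_1(\phi_j)$ interpolates continuously between the cone vertex (no extra data, the good case) and a full record of $f_1(\phi_j)$ (the bad case), while the Chebyshev count $|\{j:J_j(\phi)>0\}|\le|G|\delta_0 d_i$ keeps the added dimension at $O(\delta_0 d_i)$. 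Without this device, or an equivalent one, your upper bound does not close; with it, your outline becomes the paper's proof.
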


More precisely, we shall prove an inner equality (in relation to sofic mean dimension) for the class of amenable group actions:
\begin{itemize}\item
If an amenable group $G$ acts continuously on a compact metrizable space $X$, and if $\Sigma$ and $\Sigma^\prime$ are two sofic approximation sequences for $G$, then we have $\mdim_\Sigma(X,G)=\mdim_{\Sigma^\prime}(X,G)$.
\end{itemize}
Instead of an outer equality between the defined values at those two levels, Theorem \ref{main1} eventually allows us to unify the reduction of sofic mean dimension to mean dimension with a view towards a common value shared among all the approximation sequences for an acting group (i.e. uniquely determined\footnote{We would like to remind the reader to keep in mind that this makes it reasonable to remove the approximation sequence from the notation of mean dimension, while it is not planned to mean if this value is definable or not in the context of the non-existence of an approximation sequence.} by the action).

\medskip

As mentioned above, the main new ingredient of Theorem \ref{main1} is the statement for finite group actions which are considered, together with the full shifts, as the most standard method for generating a group action from a space, in the sense that they reflect the topological nature of a space with some canonical (sometimes, even trivial) dynamical behaviours. Conversely, these typical actions have some desired feature similar to the phenomena in (topological) dimension theory, for example, in the universality aspect. In particular, mean dimension is closely related to the (dynamical) \textit{embedding problem} (i.e., embedding dynamical systems into the shift action on Hilbert cubes) which is a wonderful application of mean dimension to dynamical systems. We shall not study this topic in this paper. Therefore we do not describe in detail how it deeply relates (abstract) dynamical systems to different areas (e.g. classic analysis). For the latest progress on this problem we refer the reader to \cite{LW,LT,Gutman,GTshifts,GT,GQT,JQhilbert}. From all those facts and results we became aware that any careful understanding of mean dimension of full shifts is valuable.

Let $G$ be a group. Let $K$ be a compact metrizable space. We denote by $(K^G,\sigma_G)$ the shift action of $G$ on the product space $K^G$. Usually it is simply called a full shift over the alphabet $K$.

The alphabet $K$ plays a crucial role in the full shift $(K^G,\sigma_G)$. Lindenstrauss and Weiss \cite{LW} showed that when $G$ is an amenable group and $K=[0,1]^D$ (where $D$ is a positive integer, or possibly, $+\infty$), the mean dimension of $(K^G,\sigma_G)$ is equal to $D$, which is the same as the dimension (by which, here and in the sequel, we mean the topological dimension, namely, the Lebesgue covering dimension) of $K$. Since mean dimension theory is apparently an analogue of dimension theory, this result naturally brings about a seemingly plausible impression, i.e., the mean dimension of the full shift $(K^G,\sigma_G)$ is equal to the dimension of the alphabet $K$ on all occasions. However, this turns out to be incorrect in general. Masaki Tsukamoto \cite{Tsukamoto} proved a satisfactory result which surprisingly denies this impression and which enables mean dimension of full shifts over \textit{finite-dimensional} alphabets to be understood completely. As far as we noted somewhere else, although Tsukamoto's result \cite{Tsukamoto} is stated for the case $G=\mathbb{Z}$, generalising it to all the amenable groups $G$ is very straightforward. For instance, this can be fulfilled, without any additional effort, with the help of \cite{JQ}.

Before we proceed any further, we would like to remark that in the context of amenable groups the key difficulty with the mean dimension of full shifts is an effective estimate from below (which was conquered by Tsukamoto \cite{Tsukamoto}). This is because the mean dimension of a full shift is dominated from above by the dimension of its alphabet \cite{LW}, which can improve itself (e.g. employing \cite{JQ}) up to an optimal upper bound (provided that the alphabet is finite-dimensional) with a standard trick. This procedure however does not apply to the sofic framework any more (for details, please refer to Subsection 3.4), which consequently becomes a main obstacle (to the exact value of sofic mean dimension of full shifts) to overcome.

For a sofic group $G$ and a sofic approximation sequence $\Sigma$ for $G$, Li \cite{Li} showed that $\mdim_\Sigma(K^G,\sigma_G)$ is bounded from above by the dimension of the alphabet $K$ (which is generally not optimal). In an earlier (unpublished) version of this paper, the authors (in an effort to deal with sofic approximation sequences for acting groups) refined Li's estimate with a substantially different method, and thus, successfully extended Tsukamoto's result \cite{Tsukamoto} to all the sofic groups $G$ as long as the alphabet $K$ is finite-dimensional. Nowadays this statement is specified within Subsection 1.3 of the present paper (as it is currently processed with a corollary of the main proposition of the present paper).

It is important to notice that among all the above results, the additional condition that assumes the alphabet $K$ to be finite-dimensional is essential to the proof. By reason of almost the same obstruction, it is not clear to us for a long time if the value $\mdim_\Sigma(K^G,\sigma_G)$, for an arbitrary alphabet $K$, will change as we choose different sofic approximation sequences $\Sigma$, or even along with different sofic groups $G$. The previous result carried out by the authors (in the early version of the present paper, as mentioned above) implies in particular that when the alphabet $K$ is finite-dimensional, the term $\mdim_\Sigma(K^G,\sigma_G)$ does be independent of the sofic approximation sequences $\Sigma$ for the group $G$. The purpose of our second main result is to carry out this assertion for all alphabets, proving that sofic mean dimension of any full shift depends purely on its alphabet.

We recall our general setting in front of the statement of the second main result. Let $K$ be a compact metrizable space. Let $G$ and $G^\prime$ be sofic groups. Let $\Sigma$ and $\Sigma^\prime$ be sofic approximation sequences for $G$ and $G^\prime$, respectively. We have an equality for sofic mean dimension of full shifts all the time:
\begin{theorem}[Main theorem 2]\label{main2}
$$\mdim_\Sigma(K^G,\sigma_G)=\mdim_{\Sigma^\prime}(K^{G^\prime},\sigma_{G^\prime}).$$
\end{theorem}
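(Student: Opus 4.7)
The plan is to exhibit, for each compact metrizable alphabet $K$, a single intrinsic quantity $c(K)$ and to show $\mdim_\Sigma(K^G,\sigma_G)=c(K)$ uniformly in the sofic group $G$ and the sofic approximation sequence $\Sigma$ for $G$. The natural candidate is
$$c(K):=\sup\bigl\{\mdim_{\Sigma_0}(L^H,\sigma_H):L\text{ is a finite-dimensional continuous surjective image of }K\bigr\},$$
where $H$ ranges over sofic groups and $\Sigma_0$ over sofic approximation sequences for $H$. By \cite{JQsofic}, each individual term in this supremum is independent of the pair $(H,\Sigma_0)$ whenever $L$ is finite-dimensional, so $c(K)$ is a well-defined invariant of the alphabet $K$ alone.

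For the lower bound $\mdim_\Sigma(K^G,\sigma_G)\geq c(K)$, I would use that any continuous surjection $\pi\colon K\to L$ induces an equivariant continuous surjection $\pi^G\colon K^G\to L^G$ intertwining the shift actions, together with the monotonicity of sofic mean dimension under equivariant continuous surjective factor maps (which comes from post-composing approximately equivariant maps with $\pi^G$). This gives $\mdim_\Sigma(K^G,\sigma_G)\geq\mdim_\Sigma(L^G,\sigma_G)$ for every finite-dimensional continuous quotient $L$ of $K$, and taking the supremum over $L$ produces the desired inequality.

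For the upper bound $\mdim_\Sigma(K^G,\sigma_G)\leq c(K)$, I would exploit the fact that every compact metrizable $K$ is an inverse limit of finite-dimensional compacta. So for any compatible metric $\rho$ on $K$ and any $\delta>0$ one can produce a finite-dimensional compact metrizable space $L_\delta$ together with continuous maps $\alpha_\delta\colon K\to L_\delta$ and $\beta_\delta\colon L_\delta\to K$ satisfying $\rho(\beta_\delta\circ\alpha_\delta(x),x)<\delta$ for all $x\in K$. Applied coordinatewise, these induce equivariant maps between the full shifts that are $\delta$-close to the identity in a fixed product metric on $K^G$. Substituting this approximate factorization into the definition of $\mdim_\Sigma$ via the approximately equivariant map spaces $\Map(\rho,F,\delta,\sigma_i)$ bounds $\Widim_\epsilon$ over $K^G$ by $\Widim_{\epsilon'}$ over the finite-dimensional alphabet $L_\delta^G$; normalising by $d_i$, passing to the relevant limits, and letting $\delta\to 0$ then delivers $\mdim_\Sigma(K^G,\sigma_G)\leq \sup_\delta\mdim_\Sigma(L_\delta^G,\sigma_G)\leq c(K)$.

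The main obstacle will be implementing this transfer uniformly in the sofic approximation $\sigma_i$: the alphabet-level $\delta$-close factorization through a finite-dimensional $L_\delta$ must be upgraded to a genuine width estimate on the spaces of approximately equivariant maps without losing the independence of $\Sigma$, and one must verify that the error introduced by the non-injectivity of $\alpha_\delta$ can be absorbed into the $\epsilon$-parameter of $\Widim_\epsilon$ rather than accumulating along the $|F|$-many coordinates being tested. This is precisely the refined upper-bound technique flagged by the authors in the abstract, and it is what removes the traditional finite-dimensionality assumption on the alphabet. Once both inequalities are established, $\mdim_\Sigma(K^G,\sigma_G)=c(K)$ is independent of $(G,\Sigma)$, and the stated equality follows at once.
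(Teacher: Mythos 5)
Your proposal breaks at its very first step: the candidate invariant $c(K)$ is not equal to $\mdim_\Sigma(K^G,\sigma_G)$, because sofic mean dimension (indeed, already classical mean dimension) is \emph{not} monotone under equivariant continuous surjective factor maps. Post-composing approximately equivariant maps with $\pi^G$ only gives a continuous map from $\Map(\rho_{K^G},F,\delta,\sigma_i)$ into $\Map(\rho_{L^G},F,\delta',\sigma_i)$; it provides no way to turn an $\epsilon$-embedding of the former into an $\epsilon$-embedding of the latter (for that you would need to \emph{lift} approximately equivariant maps along $\pi^G$, which is not possible in general). Concretely, let $K$ be the Cantor set. By the Hausdorff--Alexandroff theorem every compact metrizable space, in particular $[0,1]^n$ for every $n$, is a continuous surjective image of $K$, so your $c(K)\ge\sup_n\mdim_{\Sigma_0}(([0,1]^n)^H,\sigma_H)=\sup_n n=+\infty$; yet $\mdim_\Sigma(K^G,\sigma_G)\le\dim(K)=0$ by Li's upper bound. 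So the asserted inequality $\mdim_\Sigma(K^G,\sigma_G)\ge\mdim_\Sigma(L^G,\sigma_G)$ for quotients $L$ of $K$ is false, and $c(K)$ is the wrong invariant.

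The upper bound is also unavailable as proposed. Although every compact metrizable $K$ is an inverse limit of finite-dimensional compacta, the two-sided approximate factorization you need --- continuous maps $\alpha_\delta\colon K\to L_\delta$ and $\beta_\delta\colon L_\delta\to K$ with $\beta_\delta\circ\alpha_\delta$ uniformly $\delta$-close to the identity --- generally does not exist: for a hereditarily infinite-dimensional compactum $K$ (every closed subset has dimension $0$ or $\infty$, as quoted in Subsection 1.2), the image of any $\beta_\delta$ is a closed finite-dimensional, hence zero-dimensional, subset, which would force $\Widim_{2\delta}(K,\rho)=0$ for all $\delta>0$ and hence $\dim(K)=0$. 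This is precisely the ``cannot reduce to the finite-dimensional case'' obstruction the paper is built to avoid. The paper instead takes the intrinsic invariant to be $\lim_{\epsilon\to0}\lim_{n\to\infty}\Widim_\epsilon(K^n,\rho_\infty)/n$ and proves both bounds directly at the level of $\epsilon$-width dimension: the lower bound via an explicit distance-increasing injection of $K^{d_i}$ into $\Map(\rho,F,\delta,\sigma_i)$, and the upper bound via an explicit $(3\epsilon)$-embedding of the map space into $A_{d_i}\times(CA_1)^{|F_0|\cdot d_i}$ using a cone construction --- with no finite-dimensional intermediary and no appeal to factor monotonicity at any point.
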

The point of Theorem \ref{main2} is that the statement applies, without any restriction, to all the full shifts of all sofic groups along with any of their sofic approximation sequences. In particular, the alphabet $K$ in this equality is not required to be finite-dimensional any longer.

\medskip

\subsection{Strategy}
This subsection aims to explain\footnote{For convenience, we shall not get into technical details in this subsection. But we believe that an experienced reader interested in this topic may see the point of our strategy.} the difficulty with our main results and our main ideas. Our main purpose, as stated above, is to achieve Theorem \ref{main1} eventually. Since the proof of Theorem \ref{main1} has some similarity, especially in the key estimates, to the proof of Theorem \ref{main2}, for simplicity we only describe how we reach Theorem \ref{main2}. The problem caused by (the soficity of) the acting groups (i.e., going from amenable groups to sofic groups) will be explained in detail and settled in the sequel. After we address the soficity of the acting groups, we will finally accomplish all our main results. So, in the current subsection, let us focus mainly on the difficulty arising from the spaces rather than the groups.

As mentioned before, in an effort to work on the sofic approximation sequences for the acting groups, we have already been able to get a clear picture of these results for all the finite-dimensional compact metrizable spaces in an early version of this paper (as specified now in Subsection 1.3 of the present version). It then suffices to concentrate on processing the statement for infinite-dimensional ones. A previous problem in dealing with them is to produce an effective lower bound for sofic mean dimension. The following citations concern such an estimate from below, which are specifically related to the approach given in \cite{Tsukamoto}, which may fail unfortunately, and seems impossible to be improved with the same method, if the alphabet $K$ is infinite-dimensional.

More precisely, it was first noted by Tsukamoto \cite{Tsukamoto} that at least two difficulties appeared when handling the full shifts over infinite-dimensional alphabets, which come from two remarkable phenomena in infinite-dimensional topology. Quoting \cite{Tsukamoto} (with a slight modification):
\begin{itemize}
\item\textit{There exists an infinite-dimensional compact metrizable space $K$ containing no intermediate dimensional subspaces. Namely, every closed subset $A$ of $K$ has dimension either $0$ or $\infty$.}
\item\textit{There exists an infinite-dimensional compact metrizable space $K$ which cohomologically looks like a surface. Namely, every closed subset $A$ of $K$ satisfies that for any $n\ge3$ the \v{C}ech cohomology group $\check{H}^n(K,A)$ vanishes.}
\end{itemize}
Furthermore, Tsukamoto \cite{Tsukamoto} posed the precise reason (from outer and inner aspects, respectively) why they appear to become essential obstacles to the issue. Quoting \cite{Tsukamoto} (with a slight modification):
\begin{itemize}
\item\textit{These two difficulties are genuinely infinite-dimensional phenomena. The first difficulty implies that we cannot reduce the problem to a finite-dimensional case, and the second difficulty implies that the ordinary cohomology theory is insufficient to solve the problem.}
\end{itemize}

We overcome\footnote{Strictly speaking, we avoid them.} the above difficulties by taking a detour. In comparison with the previous work, we no longer pay attention to the exact value of sofic mean dimension of full shifts. As an alternative, our idea turns to looking for a common expression that can be shared between the estimates from both above and below. Thus, to this attempt our first problem is as follows:
\begin{itemize}\item
How to find a good candidate which is able not only to unify the upper and lower bounds qualitatively for sofic mean dimension of full shifts, but also to remove the group information (e.g. about the sofic approximation sequences for an acting group) quantitatively from the expression of estimates?
\end{itemize}
The word ``good'' here is just planned to mean that making such a candidate work on all the alphabets will be within the authors' reach.

We observe that the candidate selected in \cite{Tsukamoto} for this purpose is the term of dimension, which is actually one of the most fundamental and somewhat global invariants of topological spaces. This definitely meets all the expectations, but meanwhile, it exactly causes the difficulty (which lies mainly in one direction of the estimates).

We shall adopt a different method for realising it. Instead of the dimension of a compact metrizable space, we develop more delicate estimates for our aim (i.e. for sofic mean dimension of these typical actions) from both of the two directions, which can adapt directly to each other, with terms of $\epsilon$-width dimension. This is milder in some sense. Intuitively speaking, we take a step backwards, and in consequence, we have sight less clear than before; however, it then becomes possible for us to get a wider range of vision, such that we are almost able to treat finite-dimensional and infinite-dimensional alphabets with a unified process.

Although the candidate that we choose has some advantage, we have to be aware that it has correspondingly weaker properties. We do not list them here, as they will be settled with a more careful technique in the proof. The method that we provide is a refinement of the classical method for producing estimates for sofic mean dimension from above and below, respectively. In contrast to the classical technique, our approach to the estimates applies to both finite-dimensional and infinite-dimensional compact metrizable spaces, and in particular, it does \textit{not} rely on a practical lemma\footnote{This lemma was widely used for a similar purpose (i.e., if there is an obstacle heavily dependent on an effective estimate for $\epsilon$-width dimension from below) by the researchers interested in those topics relevant to this direction (e.g., we employ it to realize Theorem \ref{main3} and Theorem \ref{main4}).} for $\epsilon$-width dimension explored by Tsukamoto \cite{Tsukamoto}. This is the major difference between our method and the previous ones.

\medskip

\subsection{Quantitative main results}
As stated above, for any amenable group action $(X,G)$ Li \cite[Section 3]{Li} showed (with respect to any sofic approximation sequence $\Sigma$ for $G$) that $\mdim_\Sigma(X,G)$ is bounded from above by the mean dimension of $(X,G)$; if in addition, $G$ is infinite, then these two values coincide. However, in connection with a finite group $G$ this equality may be false, namely, sofic mean dimension is possible to be different from (i.e. strictly less than the classical version of) mean dimension.

More precisely, we let a finite group $G$ act continuously on a finite-dimensional compact metrizable space $X$. Note that in this case, it follows directly from the definition that the mean dimension of $(X,G)$ is equal to $\dim(X)/|G|$, an explicitly known value. Nevertheless, in contrast to the case that the acting group is infinite (and amenable), the exact value of its sofic mean dimension (with respect to a sofic approximation sequence $\Sigma$ for $G$) is not clear to us. In fact, Li \cite[Section 3]{Li} proved an upper bound for $\mdim_\Sigma(X,G)$, which is finer than $\dim(X)/|G|$, as follows: $$\mdim_\Sigma(X,G)\le\frac{1}{|G|}\cdot\inf_{n\in\mathbb{N}}\frac{\dim(X^n)}{n}.$$ The third main result of this paper is to strengthen Li's estimate with an equality which then gives the exact value of sofic mean dimension for all the actions of finite groups on finite-dimensional compact metrizable spaces.

\begin{theorem}[Main theorem 3]\label{main3}
If a finite group $G$ acts continuously on a finite-dimensional compact metrizable space $X$, and if $\Sigma$ is a sofic approximation sequence for $G$, then $$\mdim_\Sigma(X,G)=\frac{1}{|G|}\cdot\inf_{n\in\mathbb{N}}\frac{\dim(X^n)}{n}.$$
\end{theorem}

\medskip

Parallel to sofic mean dimension of finite group actions, here our attention naturally turns to sofic mean dimension of full shifts. As described just now in Subsection 1.1 about recent progress on mean dimension of full shifts, for the group $\mathbb{Z}$ the exact value was obtained satisfactorily by Tsukamoto \cite{Tsukamoto} provided the alphabet $K$ is finite-dimensional, and also noticed that by means of \cite{JQ} it is straightforward to generalize this result to amenable groups with almost no additional effort. But it turns out that the situation (as explained shortly in Subsection 3.4) becomes essentially different if we proceed to the case of sofic groups.

Actually, Li \cite[Section 7]{Li} presented (for any sofic approximation sequence $\Sigma$ for a sofic group $G$) an upper bound: $$\mdim_\Sigma(K^G,\sigma_G)\le\dim(K).$$ In general, this upper bound for $\mdim_\Sigma(K^G,\sigma_G)$ however is not optimal. The key ingredient of our fourth main result is to refine Li's estimate (with a method substantially different from Li's\footnote{Roughly speaking, Li's approach can be regarded as (in some sense, a kind of) local analysis, i.e. using the notion of \textit{sofic metric mean dimension} which dominates sofic mean dimension from above. On the contrary, we shall adopt a global treatment for $\mdim_\Sigma(K^G,\sigma_G)$.}), so as to get the exact value of sofic mean dimension of full shifts over any finite-dimensional alphabet.

\begin{theorem}[Main theorem 4]\label{main4}
Let $K$ be a finite-dimensional compact metrizable space. Let $G$ be a sofic group and $\Sigma$ a sofic approximation sequence for $G$. The following assertion is true:
$$\mdim_\Sigma(K^G,\sigma_G)=\inf_{n\in\mathbb{N}}\frac{\dim(K^n)}{n}.$$
\end{theorem}

\medskip

\subsection{Further discussion on open problems}
The aim of this subsection is to pose two open problems in company with our remark, which seem to be worth considering. In the direction of the previously obtained results, these questions appear to be natural and fundamental, and as a consequence, solutions to them will give rise to a complete picture of sofic mean dimension of amenable group actions and full shifts. We put them as follows. As usual let $\Sigma$ be a sofic approximation sequence for an acting group $G$.
\begin{itemize}
\item Let $K$ be an infinite-dimensional compact metrizable space. Is the statement $\mdim_\Sigma(K^G,\sigma_G)=+\infty$ true?
\item Let a finite group $G$ act continuously on an infinite-dimensional compact metrizable space $X$. Is the statement $\mdim_\Sigma(X,G)=+\infty$ true?
\end{itemize}
The first problem arose originally from \cite{Tsukamoto} where it was stated for the case $G=\mathbb{Z}$, while the second problem was raised initially by the authors in an early version of this paper.

Also, as indicated in the previous subsections, these two problems are intimately connected with each other as well as they are involved in our main results. The novel ingredient here is to point out (a little further on) that they reduce to each other (i.e., they are actually \textit{equivalent}):
\begin{itemize}\item
Let $K$ be a compact metrizable space. Let $G$ be a sofic group and $\Sigma$ a sofic approximation sequence for $G$. Let $G^\prime$ be a finite group and $\Sigma^\prime$ a sofic approximation sequence for $G^\prime$. Let $G^\prime$ act continuously on $K$. We have that $\mdim_\Sigma(K^G,\sigma_G)=+\infty$ if and only if $\mdim_{\Sigma^\prime}(K,G^\prime)=+\infty$.
\end{itemize}
This fact follows directly from the main proposition of this paper, which is located in Section 3.

\medskip

\subsection{Organization of the paper}
By the end of this section we overview the structure of the present paper. In Section 2, we briefly review basic definitions related to sofic mean dimension. In Section 3, we state the main proposition of this paper. Moreover, as several of its corollaries, we prove our main theorems (i.e., Theorem \ref{main1} and Theorem \ref{main2}) and quantitative main theorems (i.e., Theorem \ref{main3} and Theorem \ref{main4}). In addition, we include a toy-model of the main proposition (in company with a simple proof), which is logically independent of all the results mentioned in the paper. This part is dedicated to the technical intersection of difficulties arising from the spaces and the acting groups; namely, it explains (for this simple case) how we remove the finite-dimensional restriction from the spaces, and also, it helps us to see the additional obstacles (in a more precise way as we would show) to going a step forward (from amenable groups to sofic groups). Section 4 and Section 5 are then produced, devoted to the proof of our main proposition. Besides, two appendices are included. Appendix A is logically independent of the body of this paper, and it contains, for the reader's convenience, a standard explanation about two different approaches to the definition of sofic mean dimension. Appendix B consists of a dichotomy in dimension theory together with some corollaries and remarks (assuming the quantitative main results).

\medskip

\subsection*{Acknowledgements}
This paper was written when the first-named author stayed in for a time during his depression. He would like to take this opportunity to thank his wife for keeping persistent company with him through the difficult period. L. Jin was supported by NNSF of China No. 12201653. Y. Qiao was supported by NNSF of China No. 12371190.

\medskip

\section{Review of sofic mean dimension}
\subsection{Background}
The background of sofic mean dimension has been contained briefly in Section 1 already, which proved to be the main motivation of this paper. So we do not repeat it here. We list fundamental material on terminologies and notations. The writing of this section is mostly borrowed, for consistency, from the paper \cite{JQ} by the authors, with some necessary modification adapting to the present paper. We are now starting with basic definitions. Throughout this paper the symbol $\mathbb{N}$ is to denote the set of positive integers.

\subsection{Group actions}
Let $G$ be a group. By the terminology ``$G$ \textbf{acts continuously on} a compact metrizable space $X$'' we understand a continuous mapping $$\Phi:G\times X\to X,\quad(g,x)\mapsto gx$$ satisfying the following conditions: $$\Phi(e,x)=x,\quad\Phi(gh,x)=\Phi(g,\Phi(h,x)),\quad\forall x\in X,\;\forall g,h\in G,$$ where $e$ is the identity element of the group $G$. We generally omit the mapping $\Phi$ provided we have already gotten the action of $G$ on $X$ clear.

The full shifts are a typical class of group actions. This notion was concerned in Theorem \ref{main2}. Let $K$ be a compact metrizable space. The \textbf{shift action} of $G$ on the product space $K^G$ is defined as follows: $$\sigma_G:G\times K^G\to K^G,\quad(g,(x_h)_{h\in G})\mapsto(x_{hg})_{h\in G}.$$ We usually denote this object by $(K^G,\sigma_G)$ and call it the full shift of the group $G$ over the \textit{alphabet} $K$.

Let $G$ act continuously on compact metrizable spaces $X_n$, respectively, where $n$ ranges over some subset $R$ of $\mathbb{N}$. We define the \textbf{product action} of $G$ on the product space $\prod_{n\in R}X_n$ as follows: $$g(x_n)_{n\in R}=(gx_n)_{n\in R},\quad\forall g\in G,\;\forall(x_n)_{n\in R}\in\prod_{n\in R}X_n.$$ Product actions will be considered in the sequel (e.g. the definition of sofic mean dimension and the proof of our main proposition).

\subsection{Sofic groups}
We denote by $|F|$ the cardinality of a set $F$. For every $d\in\mathbb{N}$ we write $[d]$ for the set $\{k\in\mathbb{N}:1\le k\le d\}$ and $\Sym(d)$ for the group of permutations of $[d]$. A group $G$ is \textbf{sofic} if there is a sequence $$\Sigma=\{\sigma_i:G\to\Sym(d_i)\}_{i\in\mathbb{N}}$$ (together with a sequence $\{d_i\}_{i\in\mathbb{N}}\subset\mathbb{N}$) such that the following three conditions are satisfied:
\begin{align*}
&\bullet\quad\quad\lim_{i\to\infty}\frac{1}{d_i}|\{k\in[d_i]:\sigma_i(st)(k)=\sigma_i(s)\sigma_i(t)(k)\}|=1\quad\text{for all $s,t\in G$;}\\
&\bullet\quad\quad\lim_{i\to\infty}\frac{1}{d_i}|\{k\in[d_i]:\sigma_i(s)(k)\ne\sigma_i(t)(k)\}|=1\quad\text{for all distinct $s,t\in G$;}\\
&\bullet\quad\quad\lim_{i\to\infty}d_i=+\infty.
\end{align*}
Such a sequence $\Sigma$ is called a \textbf{sofic approximation sequence} for $G$.
\begin{remark}
Note that the third condition will be fulfilled automatically if we additionally assume the group $G$ to be infinite.
\end{remark}

\subsection{$\epsilon$-embeddings and $\epsilon$-width dimension}
We denote by $\dim(K)$ the topological dimension (i.e. the Lebesgue covering dimension) of a compact metrizable space $K$. If the space $K$ is empty, then we set $\dim(K)=-\infty$.

Let $X$ and $P$ be compact metrizable spaces. Let $\rho$ be a compatible metric on $X$. For $\epsilon>0$ a continuous mapping $f:X\to P$ is called an \textbf{$\epsilon$-embedding with respect to $\rho$} if $f(x)=f(x^\prime)$ implies $\rho(x,x^\prime)<\epsilon$, for all $x,x^\prime\in X$. Let $\Widim_\epsilon(X,\rho)$ be the minimum (topological) dimension $\dim(P)$ of a compact metrizable space $P$ which admits an $\epsilon$-embedding $f:X\to P$ with respect to $\rho$. This term was generally known as the $\epsilon$-width dimension.
\begin{remark}
We may verify that the dimension of any compact metrizable space $X$ can be recovered by: $\;\dim(X)=\lim_{\epsilon\to0}\Widim_\epsilon(X,\rho)$.
\end{remark}

\subsection{Distances $\rho_2$ and $\rho_\infty$}
Let $K$ be a compact metrizable space. Let $\rho$ be a compatible metric on $K$. For every $n\in\mathbb{N}$ we define two (different) compatible metrics $\rho_2$ and $\rho_\infty$ on the product space $K^n$ as follows: $$\rho_2\left((x_i)_{i\in[n]},(y_i)_{i\in[n]}\right)=\sqrt{\frac1n\sum_{i\in[n]}(\rho(x_i,y_i))^2},$$$$\rho_\infty\left((x_i)_{i\in[n]},(y_i)_{i\in[n]}\right)=\max_{i\in[n]}\rho(x_i,y_i).$$ We do not include $n\in\mathbb{N}$ in the notations $\rho_2$ and $\rho_\infty$ because this does not cause any ambiguity.

\subsection{Mean dimension}
A group $G$ is said to be \textbf{amenable} if there exists a sequence $\{F_n\}_{n\in\mathbb{N}}$ of nonempty finite subsets of $G$ such that for any $g\in G$ $$\lim_{n\to\infty}\frac{|F_n\triangle gF_n|}{|F_n|}=0.$$ Such a sequence $\{F_n\}_{n\in\mathbb{N}}$ is called a \textbf{F{\o}lner sequence} of the group $G$. Obviously, all the finite groups are amenable.

Next we state the definition of mean dimension for amenable group actions. We remark again that this notion is not involved in the main results of this paper. We put it here for theoretical completeness.

Let an amenable group $G$ act continuously on a compact metrizable space $X$. Take a F{\o}lner sequence $\{F_n\}_{n\in\mathbb{N}}$ of $G$ and a compatible metric $\rho$ on $X$. For a nonempty finite subset $F$ of $G$ we set $$\rho_F(x,x^\prime)=\rho_\infty\left((gx)_{g\in F},(gx^\prime)_{g\in F}\right),\quad\forall\,x,x^\prime\in X.$$ It is clear that $\rho_F$ also becomes a compatible metric on $X$. The \textbf{mean dimension} of $(X,G)$ is defined by $$\lim_{\epsilon\to0}\lim_{n\to\infty}\frac{\Widim_\epsilon(X,\rho_{F_n})}{|F_n|}.$$ It is well known that both of the limits in the above definition always exist, and that this value is independent of the choices of a F{\o}lner sequence $\{F_n\}_{n\in\mathbb{N}}$ of $G$ and a compatible metric $\rho$ on $X$.

\subsection{Sofic mean dimension}
Suppose that a sofic group $G$ acts continuously on a compact metrizable space $X$. Let $$\Sigma=\{\sigma_i:G\to\Sym(d_i)\}_{i\in\mathbb{N}}$$ be a sofic approximation sequence for $G$. We now equip $X$ with a compatible metric $\rho$ temporarily. For a finite subset $F$ of $G$, $\delta>0$ and a map $\sigma:G\to\Sym(d)$ (where $d\in\mathbb{N}$) we define $$\Map(\rho,F,\delta,\sigma)=\{\phi:[d]\to X:\rho_2(\phi\circ\sigma(s),s\phi)\le\delta,\,\forall s\in F\}.$$ We consider the set $\Map(\rho,F,\delta,\sigma)$ as a compact subspace of the product space $X^d$. In our context we usually write $\phi=(\phi_l)_{l\in[d]}\in X^d$ for $\phi:[d]\to X$ (i.e. for $l\in[d]$ we write $\phi_l$ for $\phi(l)$). The \textbf{sofic mean dimension} of $(X,G)$ with respect to $\Sigma$ is defined by $$\mdim_\Sigma(X,G)=\sup_{\epsilon>0}\inf_{F\subset G\text{ finite, }\,\delta>0}\limsup_{i\to\infty}\frac{\Widim_\epsilon\left(\Map(\rho,F,\delta,\sigma_i),\rho_\infty\right)}{d_i}.$$ A standard fact (similar to mean dimension) is that the definition of $\mdim_\Sigma(X,G)$ does not depend on the choice of compatible metrics $\rho$ on $X$. Nevertheless, as mentioned implicitly in Subsection 1.1, it is not clear if there is an example of a sofic approximation sequence $\Sigma^\prime$ different from $\Sigma$, which leads to a value $\mdim_{\Sigma^\prime}(X,G)$ different from $\mdim_\Sigma(X,G)$.
\begin{lemma}[{\cite[Section 2]{Li}}]\label{sofictimes}
Let a sofic group $G$ act continuously on compact metrizable spaces $X_n$, respectively, where $n$ runs over some $R\subset\mathbb{N}$. Let $\Sigma$ be a sofic approximation sequence for $G$. For the product action of $G$ on the product space $\prod_{n\in R}X_n$ we have an inequality for sofic mean dimension: $\,\mdim_\Sigma(\prod_{n\in R}X_n,G)\le\sum_{n\in R}\mdim_\Sigma(X_n,G)$.
\end{lemma}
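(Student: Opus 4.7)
The plan is to reduce the (possibly countable) product to a finite one by truncation, exploiting that the outermost quantifier in the definition of $\mdim_\Sigma$ is $\sup_{\epsilon>0}$. Rescale each compatible metric $\rho_n$ on $X_n$ to have diameter at most $1$, and equip $X=\prod_{n\in R}X_n$ with the compatible metric $\rho(x,y)=\sum_{n\in R}2^{-n}\rho_n(x_n,y_n)$; since sofic mean dimension does not depend on the choice of compatible metric, this choice is harmless. Two elementary observations drive the argument. First, $2^{-n}\rho_n(\pi_n x,\pi_n y)\le\rho(x,y)$, so if $\phi\in\Map(\rho,F,\delta,\sigma_i)$ for the product action then the coordinate projection $\pi_n\circ\phi$ lies in $\Map(\rho_n,F,2^n\delta,\sigma_i)$ for the individual action on $X_n$. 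Second, given $\epsilon>0$, pick $K\in\mathbb{N}$ with $\sum_{n\in R,\,n>K}2^{-n}<\epsilon/2$; then two points of $X$ that agree on the first $K$ coordinates are within $\rho$-distance $\epsilon/2$.

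Combining the two observations, the coordinate-wise truncation $\Pi_K^{d_i}:X^{d_i}\to\prod_{n\in R,\,n\le K}X_n^{d_i}$, restricted to $\Map(\rho,F,\delta,\sigma_i)$, is an $\epsilon/2$-embedding with respect to the ambient $\rho_\infty$ and lands in the finite product $\prod_{n\in R,\,n\le K}\Map(\rho_n,F,2^K\delta,\sigma_i)$ carrying the max metric $\max_{n\le K}\rho_{n,\infty}$. Now apply the finite subadditivity of $\Widim$ under max-metric products: pairing an $\epsilon/2$-embedding of each factor $\Map(\rho_n,F,2^K\delta,\sigma_i)$ into some $P_n$ of minimal dimension and taking their product map yields an $\epsilon/2$-embedding of $\prod_{n\le K}\Map(\rho_n,F,2^K\delta,\sigma_i)$ into $\prod_{n\le K}P_n$, a space of dimension at most $\sum_{n\le K}\dim(P_n)$. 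Composing with $\Pi_K^{d_i}$ produces an $\epsilon$-embedding of $\Map(\rho,F,\delta,\sigma_i)$, giving the pointwise bound
$$\Widim_\epsilon\bigl(\Map(\rho,F,\delta,\sigma_i),\rho_\infty\bigr)\le\sum_{n\in R,\,n\le K}\Widim_{\epsilon/2}\bigl(\Map(\rho_n,F,2^K\delta,\sigma_i),\rho_{n,\infty}\bigr).$$

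Dividing by $d_i$ and taking $\limsup_i$ (which is subadditive on the finite sum on the right), then the infimum over finite $F\subset G$ and $\delta>0$, using that simultaneously enlarging $F$ and shrinking $\delta$ weakly decreases each of the finitely many summands and that $\delta\mapsto 2^K\delta$ is a bijection of $(0,\infty)$, we obtain
$$\inf_{F,\delta}\limsup_i\frac{\Widim_\epsilon(\Map(\rho,F,\delta,\sigma_i),\rho_\infty)}{d_i}\le\sum_{n\in R,\,n\le K}\inf_{F,\delta}\limsup_i\frac{\Widim_{\epsilon/2}(\Map(\rho_n,F,\delta,\sigma_i),\rho_{n,\infty})}{d_i}.$$
Taking $\sup_{\epsilon>0}$ on the left and using $\sup_\epsilon\sum_{n\in R}\le\sum_{n\in R}\sup_\epsilon$ on the right (valid whether $R$ is finite or infinite, since each summand is nonnegative), one concludes $\mdim_\Sigma\bigl(\prod_{n\in R}X_n,G\bigr)\le\sum_{n\in R}\mdim_\Sigma(X_n,G)$.

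The finite-$R$ case is essentially immediate from the subadditivity of $\Widim_\epsilon$ over max-metric products; the point that requires care is the infinite-$R$ case. The main obstacle — or rather, the delicate step — is that the truncation level $K$ must be permitted to depend on $\epsilon$, which is exactly what the quantifier order $\sup_\epsilon\inf_{F,\delta}\limsup_i$ in the definition of sofic mean dimension accommodates.
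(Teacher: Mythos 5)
Your proof is correct. The paper does not actually prove this lemma---it cites \cite[Section 2]{Li}---and your argument (coordinate projections landing in $\Map(\rho_n,F,2^n\delta,\sigma_i)$, subadditivity of $\Widim_\epsilon$ under max-metric products, and an $\epsilon$-dependent truncation to handle infinite $R$, with the monotonicity in $(F,\delta)$ justifying the interchange of $\inf$ and finite sum) is essentially the standard proof found there.
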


\medskip

\section{Main proposition}
\subsection{Statement of the main proposition}
In this subsection we state our main proposition (Theorem \ref{main0}). The main theorems of this paper follow from this. The proof of Theorem \ref{main0} is located in the next two sections.

Let $X$ be a compact metrizable space. We take a compatible metric $\rho$ on $X$ arbitrarily and temporarily. We consider an expression with terms of $\epsilon$-width dimension: $$\lim_{\epsilon\to0}\lim_{n\to\infty}\frac{\Widim_\epsilon(X^n,\rho_\infty)}{n}.$$ Also, as an alternative, it is equal to $$\sup_{\epsilon>0}\inf_{n\in\mathbb{N}}\frac{\Widim_\epsilon(X^n,\rho_\infty)}{n}.$$ Clearly, this is a topological invariant of $X$. More precisely, both of the limits in the former always exist; and moreover, the defined value is independent of the choice of compatible metrics $\rho$ on $X$. The inner limit (in the first-mentioned expression) exists because the term $\Widim_\epsilon(X^n,\rho_\infty)$ is subadditive\footnote{Here we have used the classically-known fact that the dimension of the product of finitely many compact metrizable spaces is at most the sum of the dimension of those ones.} in $n$ (and therefore, this limit can be replaced by $\inf_{n\in\mathbb{N}}$), while the outer limit (in the first-mentioned expression) exists because $\Widim_\epsilon(X^n,\rho_\infty)$ is monotone in $\epsilon$ (and hence, it can be replaced with $\sup_{\epsilon>0}$).

\begin{theorem}\label{main0}\textup{(Main proposition).}
\begin{itemize}
\item[\textup{(i)}]
Let $G$ be a finite group and $\Sigma$ a sofic approximation sequence for $G$. Let $G$ act continuously on a compact metrizable space $X$. The following equality is true:
$$\mdim_\Sigma(X,G)=\frac{1}{|G|}\cdot\lim_{\epsilon\to0}\lim_{n\to\infty}\frac{\Widim_\epsilon(X^n,\rho_\infty)}{n}.$$
\item[\textup{(ii)}]
Let $G$ be a sofic group and $\Sigma$ a sofic approximation sequence for $G$. Let $X$ be a compact metrizable space. The following equality is true:
$$\mdim_\Sigma(X^G,\sigma_G)=\lim_{\epsilon\to0}\lim_{n\to\infty}\frac{\Widim_\epsilon(X^n,\rho_\infty)}{n}.$$
\end{itemize}
\end{theorem}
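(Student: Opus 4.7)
The plan is to prove both parts of Theorem~\ref{main0} via matching upper and lower bounds comparing $\Widim_\epsilon(\Map(\rho, F, \delta, \sigma_i), \rho_\infty)$ with $\Widim_{\epsilon'}(X^{N_i}, \rho_\infty)$, where $N_i \approx d_i/|G|$ in part~(i) and $N_i = d_i$ in part~(ii). After dividing by $d_i$, taking $\limsup_i$, and then letting $\delta \to 0$ and $\epsilon \to 0$, the ratio $N_i/d_i$ produces precisely the coefficient on the right-hand side (namely $1/|G|$ in (i) and $1$ in (ii)), while the subadditivity of $n \mapsto \Widim_\epsilon(X^n, \rho_\infty)$ ensures that $\Widim_\epsilon(X^{N_i}, \rho_\infty)/N_i$ converges to the desired inner limit.

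For the lower bound, the key construction in both parts is an equivariant lift. In part~(i), soficity yields, for large $i$, a $\sigma_i(G)$-invariant subset $A_i \subseteq [d_i]$ with $|A_i|/d_i \to 1$ on which $\sigma_i$ restricts to a genuine free action of $G$; fixing orbit representatives $l_1, \dots, l_{N_i}$ of $A_i$, define $\iota_i : X^{N_i} \to X^{d_i}$ by $\iota_i(x_1, \dots, x_{N_i})(\sigma_i(g)(l_j)) := g x_j$ on $A_i$ and by an arbitrary continuous extension off $A_i$. Choosing a $G$-invariant compatible metric on $X$ (available since $G$ is finite), $\iota_i$ is an isometric embedding on the $A_i$-coordinates, and the approximate equivariance needed for membership in $\Map$ fails only on the negligible complement, so $\iota_i(X^{N_i}) \subseteq \Map(\rho, F, \delta, \sigma_i)$ for $i$ large, giving $\Widim_\epsilon(\Map, \rho_\infty) \geq \Widim_\epsilon(X^{N_i}, \rho_\infty)$. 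In part~(ii), the analogous lift is $\iota_i : X^{d_i} \to (X^G)^{d_i}$ defined by $\iota_i(x_1, \dots, x_{d_i})(l)_g := x_{\sigma_i(g)(l)}$, whose image is exactly the set of genuinely equivariant maps; the evaluation $\pi_e : X^G \to X$ at the identity is a continuous retraction onto this parametrisation, and after choosing a compatible metric on $X^G$ that weights the $e$-coordinate dominantly, it transfers an $\epsilon$-embedding of $\Map$ to one of $X^{d_i}$.

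For the upper bound, I would approximate each $\phi \in \Map(\rho, F, \delta, \sigma_i)$ by its \emph{equivariantisation}: in part~(i), set $\tilde\phi(\sigma_i(g)(l_j)) := g \phi(l_j)$ on $A_i$, so that $\tilde\phi \in \iota_i(X^{N_i})$ is parametrised by $(\phi(l_1), \dots, \phi(l_{N_i}))$; in part~(ii), set $\tilde\phi(l)_g := \phi(\sigma_i(g)(l))_e$, parametrised by $\pi_e \circ \phi \in X^{d_i}$. Composing the orbit-representative projection with a near-optimal $\epsilon$-embedding $X^{N_i} \hookrightarrow P$ and extending by the direct inclusion on a controlled set $B_i$ of ``bad'' coordinates (those on which $\tilde\phi$ deviates from $\phi$ by more than $\epsilon$) yields an $O(\epsilon)$-embedding of $\Map$ into $P \times X^{|B_i|}$. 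A Chebyshev-type argument applied to the $\rho_2$-bound $\delta$ forces $|B_i|/d_i = O((\delta/\epsilon)^2)$, so the $X^{|B_i|}$ factor contributes negligibly to the dimension after dividing by $d_i$ and letting $\delta \to 0$.

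The main obstacle in both arguments is that sofic approximate equivariance controls $\phi \circ \sigma_i(s) - s\phi$ only in the $\rho_2$-average sense, whereas the $\Widim_\epsilon$ in the definition of sofic mean dimension uses the much more stringent $\rho_\infty$-metric, which is sensitive to even a single coordinate of large deviation. One cannot simply discard the coordinates where the equivariantisation $\tilde\phi$ disagrees significantly with $\phi$. The crucial resolution exploits the flexibility of our candidate $\Widim_\epsilon(X^n, \rho_\infty)$: because only an $\epsilon$-embedding (not a bi-Lipschitz map) is required, the small density of bad coordinates can be absorbed either into the $\epsilon$-slack or into a negligible auxiliary factor. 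This flexibility is precisely the technical advantage of $\epsilon$-width dimension over the topological dimension of the alphabet used in earlier approaches, and it is what allows the argument to apply uniformly to arbitrary (in particular infinite-dimensional) compact metrizable spaces, bypassing the restrictions of Tsukamoto's finite-dimensional lemma.
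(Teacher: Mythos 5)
Your overall architecture (matching upper and lower estimates against $\lim_{\epsilon\to0}\lim_{n\to\infty}\Widim_\epsilon(X^n,\rho_\infty)/n$, equivariant lifts from below, equivariantisation from above) is the same as the paper's, and your lower bounds are essentially sound. For part (i) you embed $X^{|C_i|}$ directly into $\Map(\rho,G,\delta,\sigma_i)$ for the original action; the paper instead first reduces via the subadditivity of sofic mean dimension under products to the product action on $X^{|G|}$ and then embeds all of $X^{d_i}$. Both yield the factor $1/|G|$, and your route is slightly more direct. Your part (ii) lift coincides with the paper's map $P_i$, with the dominant weighting of the $e$-coordinate playing the role of $\alpha_e=1$.

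The genuine gap is in the upper bound. You propose to send $\phi$ to an $\epsilon$-embedded image of its orbit-representative (resp.\ identity-coordinate) data together with ``the direct inclusion on a controlled set $B_i$ of bad coordinates,'' landing in $P\times X^{|B_i|}$. But $B_i=B_i(\phi)$ depends on $\phi$, and whether a coordinate $j$ is bad is not a continuous (nor locally constant) function of $\phi\in\Map(\rho,F_0,\delta_0,\sigma_i)$: a fixed $\phi_j$ can drift continuously between the good and bad regions as $\phi$ varies. Hence ``$P\times X^{|B_i|}$'' is not a well-defined target and the proposed map is not a continuous map on $\Map$. You correctly flag exactly this difficulty in your last paragraph, but ``absorbed into the $\epsilon$-slack or into a negligible auxiliary factor'' is not a construction: a single bad coordinate of large deviation already destroys an $\epsilon$-embedding with respect to $\rho_\infty$, so it cannot go into the slack, and the auxiliary factor must be built so that the global map remains continuous. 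The paper's resolution is the cone device: for \emph{every} $j$ one records $J_j(\phi)f_1(\phi_j(s))$ in the cone $CA_1$ over the image $A_1$ of an $\epsilon$-embedding of the alphabet, where $J_j(\phi)=\max\{\max_{s\in F_0}(\rho(s\phi_j,\phi_{\sigma_i(s)(j)})-\sqrt{\delta_0}),0\}$ is continuous and vanishes precisely on the good coordinates. The resulting map is globally continuous, collapses to the cone vertex $\ast$ on good coordinates (so these contribute no dimension, since the image decomposes into finitely many products indexed by which entries are non-vertex), and a Chebyshev bound on the $\rho_2$-constraint shows at most $|F_0|\cdot\delta_0\cdot d_i$ entries are non-vertex. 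Note also that the bad coordinates must be recorded in the finite-dimensional space $A_1$ (equivalently $CA_1$), not in $X$ itself as you write: if $X$ is infinite-dimensional then $\dim(X^{|B_i|})=\infty$ and your bound collapses, and the infinite-dimensional case is precisely the new content of the theorem.
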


\medskip

\subsection{Proof of the main theorems}
We are now ready to prove our main results assuming Theorem \ref{main0}.
\begin{corollary}[=Theorem \ref{main1}]
If an amenable group $G$ acts continuously on a compact metrizable space $X$, and if $\Sigma$ and $\Sigma^\prime$ are two sofic approximation sequences for $G$, then we have $\mdim_\Sigma(X,G)=\mdim_{\Sigma^\prime}(X,G)$.
\end{corollary}
\begin{proof}
If $G$ is an infinite amenable group, then by \cite[Section 3]{Li} we have $\mdim_\Sigma(X,G)=\mdim_{\Sigma^\prime}(X,G)$ (both $\mdim_\Sigma(X,G)$ and $\mdim_{\Sigma^\prime}(X,G)$ are equal to the mean dimension of $(X,G)$). If $G$ is a finite group, then Statement (i) of Theorem \ref{main0} shows in particular that $\mdim_\Sigma(X,G)=\mdim_{\Sigma^\prime}(X,G)$.
\end{proof}
\begin{corollary}[=Theorem \ref{main2}]
Let $K$ be a compact metrizable space. Let $G$ and $G^\prime$ be sofic groups. Let $\Sigma$ and $\Sigma^\prime$ be sofic approximation sequences for $G$ and $G^\prime$, respectively. The following equality is true:
$$\mdim_\Sigma(K^G,\sigma_G)=\mdim_{\Sigma^\prime}(K^{G^\prime},\sigma_{G^\prime}).$$
\end{corollary}
\begin{proof}
This follows from Statement (ii) of Theorem \ref{main0}.
\end{proof}

\medskip

\subsection{Proof of the quantitative main theorems}
As corollaries of our main proposition in the quantitative aspects, we prove Theorem \ref{main3} and Theorem \ref{main4} in this subsection. To this aim we shall employ a practical lemma due to Tsukamoto \cite[Lemma 3.1]{Tsukamoto}, which applies to finite-dimensional compact metrizable spaces. As follows we state this lemma.
\begin{lemma}[{\cite[Lemma 3.1]{Tsukamoto}}]\label{widimlowerbound}
Let $K$ be a finite-dimensional compact metrizable space. Let $\rho$ be a compatible metric on $K$. Then there is some $\delta>0$ such that for all $n\in\mathbb{N}$ and all $0<\epsilon<\delta$ we have $\Widim_\epsilon(K^n,\rho_\infty)\ge n\cdot(\dim(K)-1)$.
\end{lemma}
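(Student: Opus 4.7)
The plan is to induct on $n$ via an auxiliary product-type inequality of the form
\begin{equation*}
\Widim_\epsilon(K\times L,\rho_\infty)\ge\Widim_\epsilon(L,\rho_L)+(\dim K-1),
\end{equation*}
valid for every compact metrizable space $(L,\rho_L)$ and every $\epsilon$ below a threshold depending only on $K$ and $\rho$. To fix this threshold I would first invoke the identity $\dim K=\lim_{\eta\to0}\Widim_\eta(K,\rho)$ (the remark in Section 2.4), which is a finite limit since $K$ is finite-dimensional, and pick $\delta>0$ so that $\Widim_\eta(K,\rho)\ge\dim K$ for all $0<\eta\le\delta$. Granting the displayed inequality, iterating it $n-1$ times (each time with $L=K^{j}$ equipped with $\rho_\infty$) and using the base case $\Widim_\epsilon(K,\rho)\ge\dim K$ yields
\begin{equation*}
\Widim_\epsilon(K^n,\rho_\infty)\ge\dim K+(n-1)(\dim K-1)=n(\dim K-1)+1,
\end{equation*}
which is stronger than what the lemma demands.

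The core step is therefore the product inequality. Given an $\epsilon$-embedding $f:K\times L\to P$ with $\dim P=\Widim_\epsilon(K\times L,\rho_\infty)$, I would exploit the observation that, because $\rho_\infty$ is the maximum metric, every slice map $f(\,\cdot\,,y_0):K\to P$ is itself an $\epsilon$-embedding of $K$, so the image $f(K\times\{y_0\})\subset P$ must have dimension at least $\Widim_\epsilon(K,\rho)\ge\dim K$. Combining $f$ with an $\epsilon$-embedding $\phi:L\to Q$ realizing $\dim Q=\Widim_\epsilon(L,\rho_L)$, I would assemble the injective product map
\begin{equation*}
F:K\times L\to P\times Q,\qquad F(x,y)=(f(x,y),\phi(y)),
\end{equation*}
and analyze the projection $F(K\times L)\to Q$: its fibers contain full $\epsilon$-embedded copies of $K$, so a Hurewicz-type fiber-dimension inequality applied to this projection should give $\dim(P\times Q)\ge\dim Q+(\dim K-1)$, whence the bound on $\dim P$ follows from $\dim(P\times Q)\le\dim P+\dim Q$.

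The hardest step is extracting the ``$-1$'' correction rigorously. A naive Hurewicz argument would produce $\dim K$ in place of $\dim K-1$, but such a bound is incompatible with the Pontryagin-surface examples in which $\dim(K\times K)$ can be strictly less than $2\dim K$; the deficit of one reflects the genuine gap between an embedding and an $\epsilon$-embedding, since distinct slices $f(\,\cdot\,,y)$ may legitimately collapse onto a common codimension-one subset of $P$. I would make this precise by passing to open covers: pull back a small-mesh, order-$(\dim P)$ cover of $P$ to $K\times L$, combine it through $\phi$ with an analogous cover of $L$, and then read off the bound by observing that the order of the combined cover on $K\times L$ cannot exceed $\dim P+\dim Q+1$, while its trace on any single slice $K\times\{y_0\}$ already contributes at least $\dim K$ by the base case. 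The point of working slicewise rather than globally is precisely that it permits one fewer degree of freedom, which is the source of the $-1$.
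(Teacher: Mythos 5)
First, note that the paper does not prove this lemma at all: it is imported verbatim from Tsukamoto's paper and used only in Subsection 3.3, so there is no in-paper proof to compare against; your proposal has to stand on its own, and it does not. The entire weight of the argument rests on the product inequality $\Widim_\epsilon(K\times L,\rho_\infty)\ge\Widim_\epsilon(L,\rho_L)+(\dim K-1)$, and nothing you offer actually establishes it. The ``Hurewicz-type fiber-dimension inequality'' you invoke runs in the wrong direction: Hurewicz's theorem bounds the dimension of the total space from \emph{above} by base plus fibers; a lower bound of the form ``total $\ge$ base $+$ fiber $-1$'' is not a theorem you can cite, and your cover-counting sketch does not supply one --- the join of a pulled-back order-$(\dim P+1)$ cover with a pulled-back order-$(\dim Q+1)$ cover has order up to the \emph{product} of these numbers, not their sum, and the general-position refinement that repairs this only ever yields \emph{upper} bounds for $\dim(P\times Q)$. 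There is simply no mechanism in your sketch that produces exactly a deficit of one. More damningly, your inequality is asserted for an arbitrary compact $L$ with a threshold $\delta$ depending only on $K$; letting $\epsilon\to0$ it would give $\dim(K\times L)\ge\dim K+\dim L-1$ for \emph{all} compacta $L$, which is false (Dranishnikov's realization results produce pairs of compacta whose product has dimension strictly below $\dim K+\dim L-1$). Hence any correct proof of the product step must exploit that $L$ is itself a power of $K$, and your slice/fiber argument never does.

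The missing idea --- and the reason the lemma is attributed to Tsukamoto rather than reproved here --- is cohomological. The quotation in Subsection 1.2 about spaces that ``cohomologically look like a surface'' is the tell: Tsukamoto detects dimension through \v{C}ech cohomology of closed pairs $(K,A)$ with coefficients in a field $F$. Over a field, cohomological dimension is additive on finite products of copies of the same compactum by the K\"unneth formula, which is what makes the factor $n$ appear; and for a \emph{finite-dimensional} compactum one has $\dim_F K\ge\dim K-1$ for a suitable field $F$, which is the true source of the ``$-1$'' (the loss in passing from integral to field coefficients, consistent with the Pontryagin-surface examples you mention). One then converts a nonvanishing class in $\check{H}^{\,n(\dim K-1)}(K^n,\,\cdot\,;F)$ into a lower bound for $\Widim_\epsilon$ by choosing $\delta$ in terms of the distance between the supporting closed sets, so that no $\epsilon$-embedding with $\epsilon<\delta$ can kill the class. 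Your proposal uses finite-dimensionality only to make $\dim K$ finite, never in this essential way --- a further sign that the device producing the $-1$ is absent. The inductive reduction to a product inequality is a reasonable outline, but the core step needs this cohomological input, not a fiber or cover-counting argument.
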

As mentioned previously, the proofs of the main proposition (Theorem \ref{main0}) and our main results (Theorem \ref{main1} and Theorem \ref{main2}) do \textit{not} rely on Tsukamoto's lemma. So Lemma \ref{widimlowerbound} is borrowed \textit{only} in this subsection (in the proof of Lemma \ref{widimdim}, and in consequence, Corollary \ref{corollary1} and Corollary \ref{corollary2}). Now we begin with the following lemma which may be regarded as a limit version of Lemma \ref{widimlowerbound}.
\begin{lemma}\label{widimdim}
If $K$ is a finite-dimensional compact metrizable space and if $\rho$ is a compatible metric on $K$, then $$\lim_{\epsilon\to0}\lim_{n\to\infty}\frac{\Widim_\epsilon(K^n,\rho_\infty)}{n}=\inf_{n\in\mathbb{N}}\frac{\dim(K^n)}{n}.$$
\end{lemma}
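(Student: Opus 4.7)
The plan is to prove the two inequalities separately. Write $\alpha:=\lim_{\epsilon\to0}\lim_{n\to\infty}\Widim_\epsilon(K^n,\rho_\infty)/n$ and $\beta:=\inf_{n\in\mathbb{N}}\dim(K^n)/n$. A preliminary observation is that $\beta$ is also a limit: since $\dim(K^{m+n})=\dim(K^m\times K^n)\le\dim(K^m)+\dim(K^n)$ (which is finite because $K$ is finite-dimensional), Fekete's lemma gives $\beta=\lim_{m\to\infty}\dim(K^m)/m$.

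For the upper bound $\alpha\le\beta$, I would use that the identity map $K^n\to K^n$ is trivially an $\epsilon$-embedding for every $\epsilon>0$, so $\Widim_\epsilon(K^n,\rho_\infty)\le\dim(K^n)$. By subadditivity (the argument already noted in the excerpt), $\lim_{k\to\infty}\Widim_\epsilon(K^k,\rho_\infty)/k=\inf_k\Widim_\epsilon(K^k,\rho_\infty)/k\le\dim(K^n)/n$ for every fixed $n$, and then letting $\epsilon\to0$ and taking the infimum over $n$ gives $\alpha\le\beta$.

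For the lower bound $\alpha\ge\beta$, which is the substantive direction, I would fix $m\in\mathbb{N}$ and apply Lemma \ref{widimlowerbound} to the finite-dimensional compact metrizable space $L:=K^m$ equipped with the compatible metric $\rho_\infty$ (the one induced on $K^m$). The lemma supplies some $\delta_m>0$ such that for all $n\in\mathbb{N}$ and all $0<\epsilon<\delta_m$,
$$\Widim_\epsilon\bigl(L^n,(\rho_\infty)_\infty\bigr)\ge n\cdot\bigl(\dim(L)-1\bigr)=n\bigl(\dim(K^m)-1\bigr).$$
The key metric identification here is that $L^n=(K^m)^n$ is canonically $K^{mn}$, and the iterated $\max$-metric $(\rho_\infty)_\infty$ agrees with the single $\rho_\infty$ on $K^{mn}$. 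Dividing by $mn$ and letting $n\to\infty$ (the limit exists by the subadditivity already established in the paper), we obtain
$$\lim_{n\to\infty}\frac{\Widim_\epsilon(K^n,\rho_\infty)}{n}\ge\frac{\dim(K^m)-1}{m}\qquad\text{for every }0<\epsilon<\delta_m.$$
Since the right-hand side is independent of $\epsilon$, taking $\epsilon\to0$ yields $\alpha\ge(\dim(K^m)-1)/m$, and finally letting $m\to\infty$ gives $\alpha\ge\lim_{m\to\infty}(\dim(K^m)-1)/m=\beta$, using the preliminary observation.

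The only mild obstacle I anticipate is the bookkeeping in the lower bound: making sure Tsukamoto's lemma is applied to the right auxiliary space $K^m$ (not $K$ itself, which would only deliver $\dim(K)-1$), and verifying that the two compatible metrics on $K^{mn}$ coming from the two natural groupings agree so that no change-of-metric argument is required. Once this identification is in place, the rest is a clean interchange of two monotone limits combined with Fekete's lemma applied to $\dim(K^m)$.
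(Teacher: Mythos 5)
Your proposal is correct and follows essentially the same route as the paper's proof: the upper bound via $\Widim_\epsilon(K^n,\rho_\infty)\le\dim(K^n)$ together with subadditivity, and the lower bound by applying Lemma \ref{widimlowerbound} to the auxiliary space $K^m$ and identifying $(K^m)^n$ with $K^{mn}$ under the same sup-metric. Your explicit check that the iterated $\max$-metric agrees with $\rho_\infty$ on $K^{mn}$ is a point the paper leaves implicit, but the argument is the same.
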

\begin{proof}
First of all, we notice that the term $\dim(K^n)$ is subadditive in $n\in\mathbb{N}$, which implies that $$\lim_{n\to\infty}\frac{\dim(K^n)}{n}=\inf_{n\in\mathbb{N}}\frac{\dim(K^n)}{n}.$$ Since $\Widim_\epsilon(K^n,\rho_\infty)\le\dim(K^n)$, we have $$\lim_{\epsilon\to0}\lim_{n\to\infty}\frac{\Widim_\epsilon(K^n,\rho_\infty)}{n}\le\inf_{n\in\mathbb{N}}\frac{\dim(K^n)}{n}.$$ To see the converse direction, we take an $m\in\mathbb{N}$ arbitrarily and fix it temporarily. Note that the space $K$ is finite-dimensional, and thus, the product space $K^m$ is finite-dimensional as well. We apply Lemma \ref{widimlowerbound} to the product space $K^m$:
\begin{align*}
\lim_{\epsilon\to0}\lim_{n\to\infty}\frac{\Widim_\epsilon(K^n,\rho_\infty)}{n}
&=\lim_{\epsilon\to0}\lim_{n\to\infty}\frac{\Widim_\epsilon(K^{mn},\rho_\infty)}{mn}\\
&\ge\lim_{\epsilon\to0}\lim_{n\to\infty}\frac{n\cdot(\dim(K^m)-1)}{mn}\\
&=\frac{\dim(K^m)-1}{m}.
\end{align*}
Since $m\in\mathbb{N}$ is arbitrary, the statement follows.
\end{proof}
\begin{corollary}[=Theorem \ref{main3}]\label{corollary1}
If a finite group $G$ acts continuously on a finite-dimensional compact metrizable space $X$, and if $\Sigma$ is a sofic approximation sequence for $G$, then $$\mdim_\Sigma(X,G)=\frac{1}{|G|}\cdot\inf_{n\in\mathbb{N}}\frac{\dim(X^n)}{n}.$$
\end{corollary}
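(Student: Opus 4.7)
The plan is to obtain this corollary as an essentially one-line combination of the two tools already in hand. Introduce, as a bridge between sofic mean dimension and ordinary topological dimension, the intermediate invariant
$$\alpha(X)\,=\,\lim_{\epsilon\to0}\lim_{n\to\infty}\frac{\Widim_\epsilon(X^n,\rho_\infty)}{n},$$
which (by the discussion preceding Theorem \ref{main0}) is well-defined for any compact metrizable $X$ and independent of the compatible metric $\rho$ chosen on $X$.

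First, I will apply Statement (i) of the main proposition (Theorem \ref{main0}) to the finite group action $(X,G)$ with respect to the sofic approximation sequence $\Sigma$. No dimensional assumption on $X$ is used here; this step simply identifies $\mdim_\Sigma(X,G)$ with $\alpha(X)/|G|$. Second, I will apply Lemma \ref{widimdim} with $K=X$, which, using the hypothesis that $X$ is finite-dimensional, evaluates $\alpha(X)=\inf_{n\in\mathbb{N}}\dim(X^n)/n$. Dividing by $|G|$ yields the stated equality.

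There is essentially no obstacle at this step. The two hypotheses of the corollary enter disjointly and cleanly: finiteness of $G$ is precisely what permits the invocation of Theorem \ref{main0}(i), while finite-dimensionality of $X$ is precisely what permits the invocation of Lemma \ref{widimdim} (and hence, indirectly, of Tsukamoto's Lemma \ref{widimlowerbound}, which is the only place in the whole corollary where finite-dimensionality is genuinely needed). All real work has therefore been pushed either into the proof of the main proposition (deferred to Sections 4 and 5), which the corollary uses as a black box, or into Lemma \ref{widimdim} (already proved just above), whose ingredients are the subadditivity of $\dim(X^n)$ in $n$, the trivial bound $\Widim_\epsilon(X^n,\rho_\infty)\le\dim(X^n)$, and Tsukamoto's lemma applied to the finite-dimensional spaces $X^m$ for each $m\in\mathbb{N}$.
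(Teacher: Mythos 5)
Your proposal is correct and coincides with the paper's own proof, which likewise obtains the corollary by combining Statement (i) of Theorem \ref{main0} with Lemma \ref{widimdim}. The division of labor you describe --- finiteness of $G$ feeding the main proposition, finite-dimensionality of $X$ feeding Lemma \ref{widimdim} (and through it Tsukamoto's Lemma \ref{widimlowerbound}) --- is exactly how the paper intends the argument to go.
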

\begin{proof}
This follows from Statement (i) of Theorem \ref{main0} and Lemma \ref{widimdim}.
\end{proof}
\begin{corollary}[=Theorem \ref{main4}]\label{corollary2}
If $K$ is a finite-dimensional compact metrizable space, and if $\Sigma$ is a sofic approximation sequence for a sofic group $G$, then $$\mdim_\Sigma(K^G,\sigma_G)=\inf_{n\in\mathbb{N}}\frac{\dim(K^n)}{n}.$$
\end{corollary}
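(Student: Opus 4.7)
The plan for this corollary is straightforward: it is a two-line consequence of chaining Statement (ii) of the main proposition, Theorem \ref{main0}, with Lemma \ref{widimdim}. First I would fix any compatible metric $\rho$ on $K$ and invoke Statement (ii) of Theorem \ref{main0}, applied to the alphabet $K$ and the sofic approximation sequence $\Sigma$. This directly rewrites the quantity of interest as
\[
\mdim_\Sigma(K^G,\sigma_G)=\lim_{\epsilon\to0}\lim_{n\to\infty}\frac{\Widim_\epsilon(K^n,\rho_\infty)}{n}.
\]
The essential feature of this identity is that the right-hand side is a purely topological invariant of $K$, independent both of the choice of sofic group $G$ and of the sofic approximation sequence $\Sigma$, and (as noted at the beginning of Subsection 3.1) independent of $\rho$ as well.

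The second step is to exploit the standing hypothesis that $K$ is finite-dimensional, which is precisely what Lemma \ref{widimdim} requires. That lemma identifies the double limit above with $\inf_{n\in\mathbb{N}}\dim(K^n)/n$. Concatenating the two equalities yields the stated formula. Note that this is exactly how Corollary \ref{corollary1} is obtained from Statement (i); the only difference is the absence of the prefactor $1/|G|$, reflecting that the main proposition treats the full-shift case without any group-size normalization.

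I expect no genuine obstacle in this step itself. All of the mathematical difficulty has already been absorbed into Theorem \ref{main0}, whose proof will occupy Sections 4 and 5, and into Tsukamoto's lower bound Lemma \ref{widimlowerbound}, which enters only through Lemma \ref{widimdim}. The role of Corollary \ref{corollary2} is simply to record how the previously-known exact formula from \cite{JQsofic} is recovered from the uniform, alphabet-only expression produced by the main proposition, and in particular how the independence of $\mdim_\Sigma(K^G,\sigma_G)$ from $(G,\Sigma)$ in the finite-dimensional case becomes manifest.
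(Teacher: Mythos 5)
Your proposal is correct and follows exactly the paper's own proof: Corollary \ref{corollary2} is obtained by chaining Statement (ii) of Theorem \ref{main0} with Lemma \ref{widimdim}, the latter being where the finite-dimensionality hypothesis enters. No differences to report.
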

\begin{proof}
This follows from Statement (ii) of Theorem \ref{main0} and Lemma \ref{widimdim}.
\end{proof}

\medskip

\subsection{Some remarks}
Let $K$ be any finite-dimensional compact metrizable space. We consider $(K^\mathbb{Z},\sigma_\mathbb{Z})$, the full shift of the group $\mathbb{Z}$ over the alphabet $K$. The following two types of upper bounds: $\dim(K)$ and $\inf_{n\in\mathbb{N}}(\dim(K^n)/n)$, for the mean dimension of $(K^\mathbb{Z},\sigma_\mathbb{Z})$, can be deduced directly from each other. More precisely, the inequality $\dim(K^n)/n\le\dim(K)$ (for $n\in\mathbb{N}$) is generally true. Conversely, if we assume that the mean dimension of $(K^\mathbb{Z},\sigma_\mathbb{Z})$ is not greater than $\dim(K)$ for any finite-dimensional alphabet $K$, then replacing $K$ by $K^n$ (which is finite-dimensional as well) for an arbitrary $n\in\mathbb{N}$ and by means of \cite{JQ} (or noting that $((K^n)^\mathbb{Z},\sigma_\mathbb{Z})$ is dynamically isomorphic to $(K^\mathbb{Z},\sigma_\mathbb{Z}^n)$) we see that the mean dimension of $(K^\mathbb{Z},\sigma_\mathbb{Z})$ is bounded from above by $\dim(K^n)/n$ for all $n\in\mathbb{N}$, and thus, is bounded by $\inf_{n\in\mathbb{N}}(\dim(K^n)/n$).

Here the point is that the acting group $\mathbb{Z}$ contains a subgroup $n\mathbb{Z}$ of index $n\in\mathbb{N}$ which is possible to be arbitrarily large, and meanwhile, any subgroup action of $\mathbb{Z}$ (i.e., a group action on the same space induced by some subgroup of $\mathbb{Z}$) also forms a $\mathbb{Z}$-action (which may be different from the original $\mathbb{Z}$-action). Unfortunately, the same trick does not apply to the context of sofic group actions essentially, because in general we cannot expect an (arbitrary) acting group to have such a nice property (even to admit a subgroup of arbitrarily large index) and also because we do not have such an equality (as established in the\footnote{We do not state this equality here as we shall not use it in the present paper.} paper \cite{JQ} by the authors) for sofic mean dimension of product actions with respect to the \textit{same} sofic approximation sequence. In consequence, Li's handling of the estimate $\mdim_\Sigma(K^G,\sigma_G)\le\dim(K)$ is, to a certain extent, not adequate for the purpose of obtaining its exact value. This turns out to be one of the main obstacles that we have to overcome. To deal with it in a more careful way is crucial to our proof.

\medskip

Further, from this point of view, we provide a (very) partial answer to the problems mentioned in Subsection 1.4. But we have to note that the assumption in the statement excludes some (extremely) wild cases. We have the following proposition:
\begin{itemize}\item
If a compact metrizable space $X$ admits a homeomorphism $T:X\to X$ such that the $\mathbb{Z}$-action on $X$ generated by $T$ has positive mean dimension, then all the values below (which are concerned in the equalities of the main proposition, where $\Sigma$ is a sofic approximation sequence for an acting group $G$) are equal to $+\infty$:
\begin{itemize}
\item$\mdim_\Sigma(X,G)$ for finite groups $G$ (acting continuously on $X$),
\item$\mdim_\Sigma(X^G,\sigma_G)$ for sofic groups $G$.
\end{itemize}
\end{itemize}
The proof is simple: The full shift $(X^\mathbb{Z},\sigma_\mathbb{Z})$ contains (as subshifts defined by orbits) all the possible $\mathbb{Z}$-actions on $X$, in particular, the $\mathbb{Z}$-action on $X$ generated by $T^n:X\to X$, for any $n\in\mathbb{N}$. This implies that the (sofic) mean dimension of $(X^\mathbb{Z},\sigma_\mathbb{Z})$ is equal to $+\infty$, and hence (by the main proposition), so are all those values.

\medskip

\subsection{Looking at the simplest case: What are the key differences?}
In this subsection we shall consider a toy-model of Theorem \ref{main0}. Notice that it is logically independent of all the other parts of this paper.

We denote by $\{e\}$ the trivial group (i.e. the group that consists of the identity element only). Let $\Sigma=\{\sigma_i:\{e\}\to\Sym(d_i)\}_{i\in\mathbb{N}}$ be a sofic approximation sequence for $\{e\}$. Let $X$ be a compact metrizable space. We consider the shift action of $\mathbb{Z}$ on the product space $X^\mathbb{Z}$ and the (uniquely possible) action of $\{e\}$ on the space $X$. We aim to show the following proposition:
\begin{itemize}\item
Both the mean dimension of $(X^\mathbb{Z},\sigma_\mathbb{Z})$ and the sofic mean dimension of $(X,\{e\})$ with respect to $\Sigma$ are equal to $$\lim_{\epsilon\to0}\lim_{n\to\infty}\frac{\Widim_\epsilon(X^n,\rho_\infty)}{n},$$
where $\rho$ is an arbitrarily fixed compatible metric on $X$.
\end{itemize}

\medskip

As follows we prove this proposition.

Firstly, we note that $$\lim_{i\to\infty}\frac{1}{d_i}|\{k\in[d_i]:\sigma_i(e)(k)=k\}|=1$$ which means that for $\delta>0$ we have $$\Map(\rho,\{e\},\delta,\sigma_i)=\{\phi:[d_i]\to X:\rho_2(\phi\circ\sigma_i(e),\phi)\le\delta\}=X^{d_i},$$ for all sufficiently large $i\in\mathbb{N}$, which implies that $$\mdim_\Sigma(X,\{e\})=\sup_{\epsilon>0}\limsup_{i\to\infty}\frac{\Widim_\epsilon(X^{d_i},\rho_\infty)}{d_i}=\lim_{\epsilon\to0}\lim_{n\to\infty}\frac{\Widim_\epsilon(X^n,\rho_\infty)}{n}.$$ Notice that here we used $d_i\to+\infty$ as $i\to\infty$. This proves the latter part of the above proposition.

Next we show the former part of this proposition. Since $X$ is compact, we may assume, without loss of generality, that the distance between any two points in $X$ is bounded by $1$ with respect to $\rho$. Let $D$ be the compatible metric on the product space $X^\mathbb{Z}$, defined as follows: $$D(x,x^\prime)=\sum_{i\in\mathbb{Z}}\frac{\rho(x_i,x^\prime_i)}{2^{|i|}}\quad\quad(x=(x_i)_{i\in\mathbb{Z}},x^\prime=(x^\prime_i)_{i\in\mathbb{Z}}\,\in X^\mathbb{Z}).$$ For every $n\in\mathbb{N}$ we write $D_{(n)}$ for the compatible metric on $X^\mathbb{Z}$ defined by $$D_{(n)}(x,x^\prime)=D_\infty(((x_{i+j})_{i\in\mathbb{Z}})_{j=0}^{n-1},((x^\prime_{i+j})_{i\in\mathbb{Z}})_{j=0}^{n-1})\quad\quad(x=(x_i)_{i\in\mathbb{Z}},x^\prime=(x^\prime_i)_{i\in\mathbb{Z}}\,\in X^\mathbb{Z}).$$ Now we fix a point $p\in X$. For every $n\in\mathbb{N}$ we define a mapping $f_n:X^n\to X^\mathbb{Z}$ by sending each $(x_i)_{i=0}^{n-1}\in X^n$ to $((p)_{i\le-1},(x_i)_{0\le i\le n-1},(p)_{i\ge n})\in X^\mathbb{Z}$. Clearly, the mapping $f_n:X^n\to X^\mathbb{Z}$ is continuous and distance-increasing with respect to the metrics $\rho_\infty$ on $X^n$ and $D_{(n)}$ on $X^\mathbb{Z}$, namely $$\rho_\infty((x_i)_{i=0}^{n-1},(x^\prime_i)_{i=0}^{n-1})\le D_{(n)}(f_n((x_i)_{i=0}^{n-1}),f_n((x^\prime_i)_{i=0}^{n-1})),\;\quad\forall\,(x_i)_{i=0}^{n-1},(x^\prime_i)_{i=0}^{n-1}\in X^n.$$ This implies that $$\Widim_\epsilon(X^n,\rho_\infty)\le\Widim_\epsilon(X^\mathbb{Z},D_{(n)}),\;\quad\forall n\in\mathbb{N},\,\,\forall\epsilon>0.$$ Thus, the mean dimension of $(X^\mathbb{Z},\sigma_\mathbb{Z})$ is bounded from below by $$\lim_{\epsilon\to0}\lim_{n\to\infty}\frac{\Widim_\epsilon(X^n,\rho_\infty)}{n}.$$ To estimate the mean dimension of $(X^\mathbb{Z},\sigma_\mathbb{Z})$ from above, we fix $\epsilon>0$ arbitrarily. Let $M\in\mathbb{N}$ be a constant (depending on $\epsilon$) such that $$\rho_\infty((x_i)_{i=-M}^M,(x^\prime_i)_{i=-M}^M)<\epsilon\Longrightarrow D(x,x^\prime)<2\epsilon,\;\quad\forall\,x=(x_i)_{i\in\mathbb{Z}},x^\prime=(x^\prime_i)_{i\in\mathbb{Z}}\in X^\mathbb{Z}.$$ For any $n\in\mathbb{N}$ we take a compact metrizable space $P_n$ satisfying $$\dim(P_n)=\Widim_\epsilon(X^{n+2M},\rho_\infty)$$ in company with a continuous mapping $f_n:X^{n+2M}\to P_n$ which is an $\epsilon$-embedding with respect to the metric $\rho_\infty$ on $X^{n+2M}$. Let $$\pi_n:X^\mathbb{Z}\to X^{n+2M},\quad\,(x_i)_{i\in\mathbb{Z}}\mapsto(x_i)_{i=-M}^{n+M-1}$$ be a canonical projection. Obviously, the mapping $\pi_n$ is continuous. It follows that the continuous mapping $f_n\circ\pi_n:X^\mathbb{Z}\to P_n$ is a $(2\epsilon)$-embedding with respect to the metric $D_{(n)}$ on $X^\mathbb{Z}$. Thus, $$\Widim_{2\epsilon}(X^\mathbb{Z},D_{(n)})\le\dim(P_n)=\Widim_\epsilon(X^{n+2M},\rho_\infty),\,\quad\forall n\in\mathbb{N}.$$ This implies that $$\lim_{n\to\infty}\frac{\Widim_{2\epsilon}(X^\mathbb{Z},D_{(n)})}{n}\le\lim_{n\to\infty}\frac{\Widim_\epsilon(X^{n+2M},\rho_\infty)}{n}=\lim_{n\to\infty}\frac{\Widim_\epsilon(X^n,\rho_\infty)}{n}.$$ Since $\epsilon>0$ is arbitrary, we obtain (by definition) that the mean dimension of $(X^\mathbb{Z},\sigma_\mathbb{Z})$ is bounded from above by $$\lim_{\epsilon\to0}\lim_{n\to\infty}\frac{\Widim_\epsilon(X^n,\rho_\infty)}{n}.$$ This ends the proof of the proposition.

\medskip

Although the proof of the above proposition is simple (and self-contained), it unifies the estimates from above and below, and hence explains how we relate mean dimension of full shifts to sofic mean dimension of finite group actions with a common expression. In particular, as we can see in the proof, all the compact metrizable spaces have now been treated with a unified process (i.e., the space $X$ in this proposition is not assumed to be finite-dimensional any more).

In spite of that, we have to be faced with extra obstacles to our estimates for sofic mean dimension if we generally replace the acting group $\mathbb{Z}$ in this proposition with a sofic (but non-amenable) group, or if we investigate the sofic mean dimension of a finite (but nontrivial) group action. More precisely, we will encounter difficulties arising mainly from the acting (sofic) groups due to some bad behaviour of sofic approximation sequences which differ substantially from F{\o}lner sequences which are used to characterize amenable groups that are endowed with a much nicer approximation structure. Section 4 and Section 5 are inspired by this cause.

\medskip

\section{Sofic mean dimension of finite group actions}
In this section we prove Statement (i) of Theorem \ref{main0}. To begin with, we put some general settings.

Let $X$ be a compact metrizable space. We take a compatible metric $\rho$ on $X$ arbitrarily. Since $X$ is compact, we may assume, for simplicity, that the diameter of $X$ with respect to $\rho$ is equal to $1$.

Let $G$ be a finite group. We fix a sofic approximation sequence $\Sigma=\{\sigma_i:G\to\Sym(d_i)\}_{i\in\mathbb{N}}$ for $G$.

Now we suppose that $G$ acts continuously on $X$. Our aim is to prove the following equality: $$\mdim_\Sigma(X,G)=\frac{1}{|G|}\cdot\lim_{\epsilon\to0}\lim_{n\to\infty}\frac{\Widim_\epsilon(X^n,\rho_\infty)}{n}.$$

This will be fulfilled by Lemma \ref{estimate-i-1} and Lemma \ref{estimate-i-2}.

\begin{lemma}[Estimate from above]\label{estimate-i-1}$$\mdim_\Sigma(X,G)\le\frac{1}{|G|}\cdot\lim_{\epsilon\to0}\lim_{n\to\infty}\frac{\Widim_\epsilon(X^n,\rho_\infty)}{n}.$$\end{lemma}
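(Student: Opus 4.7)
The plan is, given $\epsilon>0$, to produce for suitable $F\subset G$, $\delta>0$, $n\in\mathbb{N}$, and all sufficiently large $i$, a continuous $\epsilon'$-embedding (in $\rho_\infty$) of $\Map(\rho,F,\delta,\sigma_i)$ into a compact space of dimension at most $\frac{d_i}{n|G|}\Widim_\epsilon(X^n,\rho_\infty)+o(d_i)$. Dividing by $d_i$, then taking $\limsup_i$, then $\inf$ over $(F,\delta)$, then $n\to\infty$, and finally $\epsilon'\to 0$ together with $\epsilon\to 0$ will give the stated upper bound. As a preliminary reduction I would invoke the standard equivalence (cf.~\cite{Li}) allowing $\mdim_\Sigma(X,G)$ to be computed with
\[
\Map_\infty(\rho,F,\delta,\sigma):=\{\phi:[d]\to X:\rho_\infty(\phi\circ\sigma(s),s\phi)\le\delta,\;\forall s\in F\}
\]
in place of $\Map$; this provides \emph{pointwise} control $\rho(\phi_{\sigma_i(g)(k)},g\phi_k)\le\delta$ at every coordinate, which will be decisive below.

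Using the sofic axioms for the finite group $G$, for any $\eta>0$ and all large enough $i$ one may partition $[d_i]=V_i\sqcup B_i$ with $|B_i|\le\eta d_i$ and $V_i=\bigsqcup_{j\in J_i}\{\sigma_i(g)(k_{i,j}):g\in G\}$ a disjoint union of free $G$-orbits, so $|J_i|\ge(1-\eta)d_i/|G|$. Group the base points into batches of size $n$; this yields $N_i\le d_i/(n|G|)$ batches, plus at most $n-1$ leftover base points. Choose an $\epsilon$-embedding $\psi:X^n\to P$ with $\dim P\le\Widim_\epsilon(X^n,\rho_\infty)$, and an $\epsilon$-embedding $\pi:X^{|B_i|+n-1}\to Q$ (absorbing $B_i$ together with the leftover base points) with $\dim Q\le\Widim_\epsilon(X^{|B_i|+n-1},\rho_\infty)$. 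Concatenate into
\[
\Psi_i:\Map_\infty(\rho,G,\delta,\sigma_i)\longrightarrow P^{N_i}\times Q,
\]
applying $\psi$ batchwise to the base-point coordinates and $\pi$ to the rest. To see $\Psi_i$ is an $\epsilon'$-embedding: if $\Psi_i(\phi)=\Psi_i(\phi')$ then $\rho(\phi_k,\phi'_k)<\epsilon$ at every base point and every leftover coordinate, and for a non-base coordinate $p=\sigma_i(g)(k_{i,j})$ the triangle inequality together with the defining condition of $\Map_\infty$ yields
\[
\rho(\phi_p,\phi'_p)\le\rho(\phi_p,g\phi_{k_{i,j}})+\rho(g\phi_{k_{i,j}},g\phi'_{k_{i,j}})+\rho(g\phi'_{k_{i,j}},\phi'_p)\le 2\delta+\omega_G(\epsilon),
\]
where $\omega_G$ is a common modulus of continuity for the homeomorphisms $x\mapsto gx$, $g\in G$. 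Taking $\epsilon,\delta$ small enough that the right-hand side lies below $\epsilon'$ completes the verification; the bound $\dim(P^{N_i}\times Q)\le N_i\Widim_\epsilon(X^n,\rho_\infty)+\Widim_\epsilon(X^{|B_i|+n-1},\rho_\infty)$ then delivers the estimate, since dividing by $d_i$ and taking $i\to\infty$ leaves $\frac{1}{n|G|}\Widim_\epsilon(X^n,\rho_\infty)$ plus a term that vanishes as $\eta\to 0$ (using $|B_i|/d_i\to 0$ and the uniform boundedness of $\Widim_\epsilon(X^m,\rho_\infty)/m$ in $m$ coming from subadditivity together with $\Widim_\epsilon(X,\rho)<\infty$).

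The substantive part of the work is not this dimension count but two subtler points. The first is carrying out the sofic decomposition of $[d_i]$ into free $G$-orbits from the defining conditions; for a finite group this is essentially routine but must be arranged so that the base-point selection is coherent. The second, and in my view the main obstacle, is the $\rho_2\to\rho_\infty$ passage: if one works directly with $\Map$ (which uses $\rho_2$), then the set of coordinates on which $\phi_{\sigma_i(g)(k)}\approx g\phi_k$ fails is $\phi$-dependent and could contain up to $|G|\delta^2 d_i/\epsilon^2$ indices, so the triangle inequality above breaks on a $\phi$-dependent sub-orbit. The reduction to $\Map_\infty$---or an equivalent direct argument ``correcting'' bad orbits continuously---is precisely what rescues the $\rho_\infty$-width estimate from the easier $\rho_2$-width one.
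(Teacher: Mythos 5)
Your outline of the orbit decomposition, the batchwise embedding of the base-point coordinates, and the final dimension count agrees with the skeleton of the paper's argument, and you have correctly located the real difficulty in the passage from the $\rho_2$-condition defining $\Map$ to the pointwise control needed for your triangle-inequality step. But that difficulty is not resolved by your proposal: the ``standard equivalence'' allowing $\Map_\infty$ to replace $\Map$ in the definition of $\mdim_\Sigma$ is not established in \cite{Li} and cannot simply be cited. Since $\rho_2\le\rho_\infty$, one only has $\Map_\infty(\rho,F,\delta,\sigma)\subseteq\Map(\rho,F,\delta,\sigma)$, so an upper bound for $\Widim_\epsilon$ of the smaller set says nothing about the larger one; and there is no reverse inclusion with adjusted parameters, because an element of $\Map(\rho,F,\delta',\sigma_i)$ may be arbitrarily bad on a small but $\phi$-dependent set of coordinates. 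Upgrading this to an equivalence of the resulting mean-dimension quantities would require exactly the continuous treatment of that $\phi$-dependent bad set which you defer: the analogous statement for sofic \emph{entropy} is soft because one only counts separated sets, but for $\Widim_\epsilon$ one must produce a single continuous $\epsilon$-embedding of the whole of $\Map$, and the indicator of the bad set is not continuous in $\phi$. As written, your argument bounds the sofic mean dimension computed with $\Map_\infty$, which is a priori only a lower bound for the quantity you need to dominate.

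The paper closes precisely this gap by working with the $\rho_2$-version of $\Map$ directly and replacing the good/bad dichotomy by a continuous interpolation: for each $j\in[d_i]$ it introduces the continuous defect $J_j(\phi)=\max\bigl\{\max_{g\in G}\bigl(\rho(g\phi_j,\phi_{\sigma_i(g)(j)})-\sqrt{\delta_0}\bigr),0\bigr\}$ and records, besides $f_{|C_i|}((\phi_j)_{j\in C_i})$ on the orbit representatives and the coordinates outside $\sigma_i(G)(C_i)$, the point $J_j(\phi)f_1(\phi_j)$ in the cone $CY_1$ for \emph{every} coordinate $j$. When $J_j(\phi)=0$ the cone entry degenerates to the vertex and $\phi_j$ is recovered from its orbit representative by the triangle inequality (this is where the uniform continuity of the $G$-action, in the form $\rho(x,x')\le3\eta\Rightarrow\rho(gx,gx')<\epsilon$, enters); when $J_j(\phi)>0$ the coordinate is recorded outright. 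The $\rho_2$-condition then bounds the number of non-vertex cone entries by $|G|\delta_0 d_i$ uniformly in $\phi$, so the image of the resulting map $H_i$ lies in a finite union of closed products of dimension at most $\dim(Y_{|C_i|})+\dim(CY_1)\cdot|G|\delta_0 d_i+\dim(Y_1)\cdot|[d_i]\setminus\sigma_i(G)(C_i)|$. To repair your proof you would need to replace the appeal to $\Map_\infty$ by this cone device or an equivalent continuous correction of bad coordinates; the remaining bookkeeping in your proposal then goes through.
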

\begin{proof}
To show the statement, we fix $\epsilon>0$ and $\theta>0$ arbitrarily. Since $X$ is a compact metrizable space and since $G$ is a finite group, there is some $0<\eta=\eta(\epsilon)<\epsilon$ such that if two points $x,x^\prime\in X$ satisfy $\rho(x,x^\prime)\le3\eta$ then they must satisfy $\rho(gx,gx^\prime)<\epsilon$ for all $g\in G$. Clearly, we have $\eta=\eta(\epsilon)\to0$ as $\epsilon\to0$, and hence, by definition it suffices to prove (for those $\theta>0$, $\epsilon>0$ and $\eta>0$ fixed already) that there is some $\delta_0>0$ satisfying the following inequality: $$\limsup_{i\to\infty}\frac{\Widim_\epsilon(\Map(\rho,G,\delta_0,\sigma_i),\rho_\infty)}{d_i}\le\frac{1}{|G|}\cdot\lim_{n\to\infty}\frac{\Widim_\eta(X^n,\rho_\infty)}{n}+\theta.$$

For every $i\in\mathbb{N}$ we let $Q_i\subset[d_i]$ be the intersection of the following two sets: $$\left\{j\in[d_i]:\sigma_i(g)(j)\ne\sigma_i(h)(j),\;\forall g\ne h\in G\right\},$$$$\left\{j\in[d_i]:\sigma_i(g)(\sigma_i(h)(j))=\sigma_i(gh)(j),\;\forall g,h\in G\right\}.$$ For any $j\in[d_i]$ and $A\subset[d_i]$ we set $$\sigma_i(G)(j)=\{\sigma_i(g)(j):g\in G\}\subset[d_i],\quad\sigma_i(G)(A)=\{\sigma_i(g)(a):g\in G,a\in A\}\subset[d_i].$$ We note that by the definition of $Q_i$ we have $|\sigma_i(G)(j)|=|G|$ and $\sigma_i(G)(\sigma_i(G)(j))=\sigma_i(G)(j)$, for all $j\in Q_i$. It follows that $$\sigma_i(G)(j)=\sigma_i(G)(l),\quad\quad\forall\,l\in\sigma_i(G)(j).$$ We put an element $j\in Q_i$ into a set $C_i$ and remove all the members of $\sigma_i(G)(j)$ from the set $Q_i$. By dealing with the resulting sets (with the same method that we just described) finitely many times we can find\footnote{Here the argument is simple because the acting group $G$ is finite. It is also possible to have such an injective mapping, carried out with a much more complicated process, if the group is amenable but not necessarily finite. We do not need this in our proof. For the general statement, please refer to \cite[Section 3]{Li}.} some $C_i\subset Q_i$ of the maximum cardinality satisfying that the mapping $$G\times C_i\to[d_i],\quad(g,j)\mapsto\sigma_i(g)(j)$$ is injective. We note that this implies automatically that $\sigma_i(e)(c)=c$, for all $c\in C_i$, where $e$ is the identity element of the group $G$.

For every $n\in\mathbb{N}$ we take a compact metrizable space $Y_n$ satisfying $$\Widim_\eta(X^n,\rho_\infty)=\dim(Y_n)<+\infty$$ in company with a continuous mapping $$f_n:X^n\to Y_n$$ which is an $\eta$-embedding with respect to the metric $\rho_\infty$ on the product space $X^n$. We notice that the space $Y_n$ is finite-dimensional because the space $X^n$ is compact.

We take $\tau>0$ with $$\tau\cdot\Widim_\eta(X,\rho)<\theta/2.$$ Since $G$ is finite, there exists some $i_0\in\mathbb{N}$ such that $$|\sigma_i(G)(C_i)|>(1-\tau)\cdot d_i,\quad\forall i\ge i_0.$$ We take $\delta_0>0$ satisfying $$\delta_0<\min\left\{\eta^2,\,\frac{1}{|G|\cdot(1+\Widim_\eta(X,\rho))}\cdot\frac{\theta}{2}\right\}.$$

Let $CY_1=([0,1]\times Y_1)/\sim$ be the cone generated by $Y_1$, where $(0,y)\sim(0,y^\prime)$ for any $y,y^\prime\in Y_1$. We denote by $\lambda y$ the equivalence class of $(\lambda,y)\in[0,1]\times Y_1$. In particular, we set $\ast=0y$ (for all $y\in Y_1$). The symbol $\ast$ is to denote the vertex of the cone $CY_1$. The following fact is obvious: $$\Widim_\eta(X,\rho)=\dim(Y_1)\le\dim(CY_1)\le1+\dim(Y_1)=1+\Widim_\eta(X,\rho).$$

We fix $i\in\mathbb{N}$ for the moment.

For each $j\in[d_i]$ we define a \textit{continuous} mapping $$J_j:\Map(\rho,G,\delta_0,\sigma_i)\to[0,1]$$ by sending $\phi=(\phi_j)_{j\in[d_i]}\in\Map(\rho,G,\delta_0,\sigma_i)$ to $$J_j(\phi)=\max\left\{\max_{g\in G}\left(\rho(g\phi_j,\phi_{\sigma_i(g)(j)})-\sqrt{\delta_0}\right),\,0\right\}.$$ Now we define a mapping $H_i$ as follows: $$H_i:\Map(\rho,G,\delta_0,\sigma_i)\to Y_{|C_i|}\times(CY_1)^{d_i}\times Y_1^{|[d_i]\setminus\sigma_i(G)(C_i)|},$$$$\phi=(\phi_j)_{j\in[d_i]}\mapsto\left(f_{|C_i|}((\phi_j)_{j\in C_i}),(J_j(\phi)f_1(\phi_j))_{j\in[d_i]},(f_1(\phi_j))_{j\in[d_i]\setminus\sigma_i(G)(C_i)}\right).$$ Since all the mappings $J_j$ (where $j\in[d_i]$) and $f_n$ (where $n\in\mathbb{N}$) are continuous, the mapping $H_i$ constructed above is also continuous.

We claim that $H_i$ is an $\epsilon$-embedding with respect to $\rho_\infty$. To verify this, we suppose that $H_i(\xi)=H_i(\psi)$ for some $\xi,\psi\in\Map(\rho,G,\delta_0,\sigma_i)$. What we need to show is that $\rho(\xi_j,\psi_j)<\epsilon$ for all $j\in[d_i]$. If $j\in[d_i]$ satisfies $J_j(\xi)=J_j(\psi)>0$ or it satisfies $j\in[d_i]\setminus\sigma_i(G)(C_i)$, then it is clear that $f_1(\xi_j)=f_1(\psi_j)$ which (by noting the fact that the continuous mapping $f_1:X\to Y_1$ is an $\eta$-embedding with respect to the metric $\rho$ on $X$) implies that $\rho(\xi_j,\psi_j)<\eta<\epsilon$. So we now assume that $j\in\sigma_i(G)(C_i)$ and $J_j(\xi)=J_j(\psi)=0$. This implies that $j=\sigma_i(s)(c)$ for some $s\in G$ and $c\in C_i\subset Q_i$, and that $$\rho(s^{-1}\xi_j,\xi_{\sigma_i(s^{-1})(j)})\le\sqrt{\delta_0}<\eta,\quad\quad\rho(s^{-1}\psi_j,\psi_{\sigma_i(s^{-1})(j)})\le\sqrt{\delta_0}<\eta.$$ Since $f_{|C_i|}((\xi_k)_{k\in C_i})=f_{|C_i|}((\psi_k)_{k\in C_i})$ and since the continuous mapping $f_{|C_i|}:X^{C_i}\to Y_{|C_i|}$ is an $\eta$-embedding with respect to the metric $\rho_\infty$ on the product space $X^{C_i}$, we have $\rho_\infty((\xi_k)_{k\in C_i},(\psi_k)_{k\in C_i})<\eta$, and hence in particular, $\rho(\xi_c,\psi_c)<\eta$. By noting that $$\sigma_i(s^{-1})(j)=\sigma_i(s^{-1})(\sigma_i(s)(c))=\sigma_i(s^{-1}s)(c)=\sigma_i(e)(c)=c,$$ we have $$\rho(\xi_{\sigma_i(s^{-1})(j)},\psi_{\sigma_i(s^{-1})(j)})<\eta.$$ It follows that $$\rho(s^{-1}\xi_j,s^{-1}\psi_j)\le3\eta.$$ Thus, we obtain $\rho(\xi_j,\psi_j)<\epsilon$. This proves the claim.

It follows from this claim that $\Widim_\epsilon(\Map(\rho,G,\delta_0,\sigma_i),\rho_\infty)$ is bounded from above by $\dim(H_i(\Map(\rho,G,\delta_0,\sigma_i)))$. To estimate the latter term, we consider the set $$P_i(\phi)=\{j\in[d_i]:J_j(\phi)>0\},$$ for any $\phi\in\Map(\rho,G,\delta_0,\sigma_i)$. When $\phi\in\Map(\rho,G,\delta_0,\sigma_i)$ we have $$\delta_0\cdot|P_i(\phi)|\le\sum_{g\in G,j\in[d_i]}\rho(g\phi_j,\phi_{\sigma_i(g)(j)})^2\le|G|\cdot\delta_0^2\cdot d_i$$ and hence $$|P_i(\phi)|\le|G|\cdot\delta_0\cdot d_i.$$ This implies that any element in the image (regarded as a compact subset of the product space $(CY_1)^{d_i}$ consisting of members all having $d_i$ entries) of the mapping $$\Map(\rho,G,\delta_0,\sigma_i)\to(CY_1)^{d_i},\quad\quad(\phi_j)_{j\in[d_i]}\mapsto(J_j(\phi)f_1(\phi_j))_{j\in[d_i]}$$ has entries at most $|G|\cdot\delta_0\cdot d_i$, which do not take the value $\ast$. Since the dimension of a finite union of compact metrizable spaces is equal to the maximum among the dimension of those participants in the union, we conclude that $\dim(H_i(\Map(\rho,G,\delta_0,\sigma_i)))$ is bounded from above by $$\dim(Y_{|C_i|})+\dim(CY_1)\cdot|G|\cdot\delta_0\cdot d_i+\dim(Y_1)\cdot|[d_i]\setminus\sigma_i(G)(C_i)|$$ which, for all $i\ge i_0$, does not exceed
\begin{align*}
&\dim(Y_{|C_i|})+(1+\dim(Y_1))\cdot|G|\cdot\delta_0\cdot d_i+\dim(Y_1)\cdot\tau\cdot d_i\\
=&\Widim_\eta(X^{|C_i|},\rho_\infty)+(1+\Widim_\eta(X,\rho))\cdot|G|\cdot\delta_0\cdot d_i+\Widim_\eta(X,\rho)\cdot\tau\cdot d_i\\
\le&\Widim_\eta(X^{|C_i|},\rho_\infty)+\frac\theta2\cdot d_i+\frac\theta2\cdot d_i\\
=&\Widim_\eta(X^{|C_i|},\rho_\infty)+\theta\cdot d_i.
\end{align*}
Finally, we note that $|C_i|\to+\infty$ (because $d_i\to+\infty$) as $i\to\infty$ and that for all $i\in\mathbb{N}$ we have $d_i\ge|G|\cdot|C_i|$. Thus, we deduce that
\begin{align*}
\limsup_{i\to\infty}\frac{\Widim_\epsilon(\Map(\rho,G,\delta_0,\sigma_i),\rho_\infty)}{d_i}
&\le\limsup_{i\to\infty}\frac{\dim(H_i(\Map(\rho,G,\delta_0,\sigma_i)))}{d_i}\\
&\le\limsup_{i\to\infty}\frac{\Widim_\eta(X^{|C_i|},\rho_\infty)}{d_i}+\theta\\
&\le\frac{1}{|G|}\cdot\limsup_{i\to\infty}\frac{\Widim_\eta(X^{|C_i|},\rho_\infty)}{|C_i|}+\theta\\
&=\frac{1}{|G|}\cdot\lim_{n\to\infty}\frac{\Widim_\eta(X^n,\rho_\infty)}{n}+\theta.
\end{align*}
This completes the proof.
\end{proof}

\begin{lemma}[Estimate from below]\label{estimate-i-2}$$\mdim_\Sigma(X,G)\ge\frac{1}{|G|}\cdot\lim_{\epsilon\to0}\lim_{n\to\infty}\frac{\Widim_\epsilon(X^n,\rho_\infty)}{n}.$$\end{lemma}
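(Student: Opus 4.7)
The plan is to construct, for every sufficiently large $i$, a continuous, distance-increasing map $\Psi_i : X^{|C_i|} \to \Map(\rho,G,\delta,\sigma_i)$ (with respect to $\rho_\infty$ on both sides) where $C_i \subset [d_i]$ is exactly the set produced in the proof of Lemma \ref{estimate-i-1}. Such a map immediately yields
$$\Widim_\epsilon(X^{|C_i|},\rho_\infty)\le\Widim_\epsilon(\Map(\rho,G,\delta,\sigma_i),\rho_\infty),$$
and dividing by $d_i$ and invoking $|C_i|/d_i \to 1/|G|$ gives the desired lower bound after taking the appropriate infima/suprema. The construction is essentially an equivariant ``copying'' of $X^{C_i}$ into $X^{d_i}$, compatible with the almost-action $\sigma_i$.

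Concretely, I would re-use the properties of $C_i$ from the upper bound proof: the map $G\times C_i\to[d_i]$, $(g,c)\mapsto\sigma_i(g)(c)$, is injective; $\sigma_i(e)(c)=c$ and $\sigma_i(g)\sigma_i(h)(c)=\sigma_i(gh)(c)$ for all $c\in C_i$, $g,h\in G$; and $|\sigma_i(G)(C_i)|>(1-\tau)d_i$ for all $i\ge i_0(\tau)$, where $\tau>0$ can be taken arbitrarily small. Fixing an auxiliary point $p\in X$, define $\Psi_i((x_c)_{c\in C_i})=(\phi_j)_{j\in[d_i]}$ by $\phi_{\sigma_i(g)(c)}=gx_c$ whenever $j=\sigma_i(g)(c)\in\sigma_i(G)(C_i)$ (well-defined by injectivity), and $\phi_j=p$ otherwise. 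For $j=\sigma_i(g)(c)\in\sigma_i(G)(C_i)$ and any $s\in G$, the relation $\sigma_i(s)(\sigma_i(g)(c))=\sigma_i(sg)(c)$ gives $\phi_{\sigma_i(s)(j)}=sgx_c=s\phi_j$, so $\rho(s\phi_j,\phi_{\sigma_i(s)(j)})=0$ on $\sigma_i(G)(C_i)$, while on the complement the error is bounded by the diameter $1$. Hence
$$\rho_2(s\phi,\phi\circ\sigma_i(s))^2\le\frac{|[d_i]\setminus\sigma_i(G)(C_i)|}{d_i}<\tau,$$
so choosing $\tau\le\delta^2$ guarantees $\Psi_i(X^{C_i})\subset\Map(\rho,G,\delta,\sigma_i)$ for all $i\ge i_0$.

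Next, because $\sigma_i(e)(c)=c$, taking $g=e$ in the definition yields $\phi_c=x_c$ for each $c\in C_i$, so
$$\rho_\infty(\Psi_i(x),\Psi_i(x'))\ge\max_{c\in C_i}\rho(x_c,x'_c)=\rho_\infty((x_c)_{c\in C_i},(x'_c)_{c\in C_i}).$$
Consequently, if $P$ is a compact metrizable space of dimension $\Widim_\epsilon(\Map(\rho,G,\delta,\sigma_i),\rho_\infty)$ admitting an $\epsilon$-embedding $f:\Map(\rho,G,\delta,\sigma_i)\to P$ with respect to $\rho_\infty$, then $f\circ\Psi_i:X^{C_i}\to P$ is an $\epsilon$-embedding with respect to $\rho_\infty$ on $X^{C_i}$, giving $\Widim_\epsilon(X^{|C_i|},\rho_\infty)\le\Widim_\epsilon(\Map(\rho,G,\delta,\sigma_i),\rho_\infty)$ for every $i\ge i_0$.

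To finish, divide by $d_i$ and take $\limsup_{i\to\infty}$. Since $|C_i|\cdot|G|\le d_i$ and $|C_i|\cdot|G|>(1-\tau)d_i$, one has $|C_i|/d_i\to 1/|G|$ as $\tau\to 0$, and since $d_i\to\infty$ also $|C_i|\to\infty$, so $\Widim_\epsilon(X^{|C_i|},\rho_\infty)/|C_i|\to\lim_{n\to\infty}\Widim_\epsilon(X^n,\rho_\infty)/n$. Combining,
$$\frac{1}{|G|}\cdot\lim_{n\to\infty}\frac{\Widim_\epsilon(X^n,\rho_\infty)}{n}\le\limsup_{i\to\infty}\frac{\Widim_\epsilon(\Map(\rho,G,\delta,\sigma_i),\rho_\infty)}{d_i}.$$
Since $\delta>0$ was arbitrary and $F=G$ is the largest admissible finite subset, taking $\inf$ over $\delta$ (and $F$) and then $\sup$ over $\epsilon>0$ (equivalently $\epsilon\to 0$) yields the claimed inequality. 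The main technical point, and the only place where non-amenability-type issues could intrude, is the construction of $\Psi_i$ with the simultaneous requirements that it be defined on all of $X^{|C_i|}$, that its image sit in $\Map(\rho,G,\delta,\sigma_i)$, and that it be distance-increasing; all three are handled precisely because $C_i$ behaves like a genuine set of free-orbit representatives for $\sigma_i$ on most of $[d_i]$, and the small remaining ``bad'' part of $[d_i]$ contributes only a $\sqrt{\tau}$ error in the $\rho_2$-norm thanks to diameter $1$.
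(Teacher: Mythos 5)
Your proof is correct, but it takes a genuinely different route from the paper's. The paper first invokes Lemma \ref{sofictimes} to reduce the claim to $\mdim_\Sigma(X^{|G|},G)\ge\lim_{\epsilon\to0}\lim_{n}\Widim_\epsilon(X^n,\rho_\infty)/n$ for the \emph{product action} on $X^{|G|}$, and then builds a distance-increasing map $P_i:X^{d_i}\to (X^{|G|})^{d_i}$, defined by $\phi_j(g)=hg^{-1}p_{\sigma_i(g)(k)}$ for $j=\sigma_i(h)(k)$ with $k\in C_i$ and $\phi_j(g)=p_j$ off $\sigma_i(G)(C_i)$, whose image lies in $\Map(D,G,\delta,\sigma_i)$; since the \emph{entire} cube $X^{d_i}$ is embedded, the factor $1/|G|$ enters only through the subadditivity of sofic mean dimension under products. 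You instead embed the smaller cube $X^{C_i}$ directly into $\Map(\rho,G,\delta,\sigma_i)$ for the original action, by spreading each coordinate $x_c$ equivariantly over the free pseudo-orbit $\sigma_i(G)(c)$ and padding with a basepoint, and you extract the factor $1/|G|$ from the density $|C_i|/d_i\to 1/|G|$. Both arguments rest on the same combinatorial skeleton ($Q_i$, the maximal transversal $C_i$, injectivity of $(g,c)\mapsto\sigma_i(g)(c)$, and near-multiplicativity on $Q_i$), and your verification that the image lies in the Map space and that the map is distance-increasing (via $\sigma_i(e)(c)=c$, so $\phi_c=x_c$) is sound. What your route buys is self-containment: it avoids Lemma \ref{sofictimes} entirely and makes the origin of the $1/|G|$ transparent (one free $X$-coordinate per $\sigma_i$-orbit of a point of $C_i$); what the paper's route buys is that the lower-bound map is defined on all of $X^{d_i}$, so no asymptotic density computation is needed at the final limit stage. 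One small imprecision: you write that $|C_i|/d_i\to1/|G|$ ``as $\tau\to0$''; the correct statement is that $|C_i|/d_i\to1/|G|$ as $i\to\infty$, because $|G|\cdot|C_i|=|\sigma_i(G)(C_i)|\ge|Q_i|$ and $|Q_i|/d_i\to1$ by soficity (the parameter $\tau$ only affects the threshold $i_0$, not the sets $C_i$); this does not affect the validity of your conclusion.
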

\begin{proof}
First of all, we note that Lemma \ref{sofictimes} allows us to reduce this issue to the following statement: $$\mdim_\Sigma(X^{|G|},G)\ge\lim_{\epsilon\to0}\lim_{n\to\infty}\frac{\Widim_\epsilon(X^n,\rho_\infty)}{n}.$$ To show the reduced inequality, we fix the compatible metric $D=\rho_\infty$ on the product space $X^{|G|}$. We would like to remind the reader to keep in mind that only the product action $(X^{|G|},G)$ is concerned in the remaining part of this proof.

We take $\epsilon>0$ and $\delta>0$ arbitrarily and fix them in the proof. By definition it suffices to prove that $$\limsup_{i\to\infty}\frac{\Widim_\epsilon(\Map(D,G,\delta,\sigma_i),D_\infty)}{d_i}\ge\lim_{n\to\infty}\frac{\Widim_\epsilon(X^n,\rho_\infty)}{n}.$$

For every $i\in\mathbb{N}$ we let $Q_i\subset[d_i]$ be the intersection of the following two sets: $$\left\{j\in[d_i]:\sigma_i(g)(j)\ne\sigma_i(g^\prime)(j),\;\forall g\ne g^\prime\in G\right\},$$$$\left\{j\in[d_i]:\sigma_i(g)(\sigma_i(g^\prime)(j))=\sigma_i(gg^\prime)(j),\;\forall g,g^\prime\in G\right\}.$$ For any $j\in[d_i]$ and $A\subset[d_i]$ we set $$\sigma_i(G)(j)=\{\sigma_i(g)(j):g\in G\}\subset[d_i],\quad\sigma_i(G)(A)=\{\sigma_i(g)(a):g\in G,a\in A\}\subset[d_i].$$ We note that by the definition of $Q_i$ we have $$|\sigma_i(G)(j)|=|G|,\quad\quad\sigma_i(G)(\sigma_i(G)(j))=\sigma_i(G)(j),\quad\quad\forall j\in Q_i.$$ It follows that $\sigma_i(G)(j)=\sigma_i(G)(l)$, for all $l\in\sigma_i(G)(j)$. As explained before (in the proof of Lemma \ref{estimate-i-1}), we can find a subset $C_i$ of $Q_i$, such that the mapping $$G\times C_i\to[d_i],\quad(g,j)\mapsto\sigma_i(g)(j)$$ is injective. Since $G$ is a finite group, there is some $i_0\in\mathbb{N}$ (sufficiently large) satisfying that $$|\sigma_i(G)(C_i)|>(1-\delta^2)\cdot d_i,\quad\quad\forall\;i\ge i_0.$$

For every $i\in\mathbb{N}$ we define a mapping $P_i$ as follows: $$P_i:X^{d_i}\to(X^{|G|})^{d_i},\quad p=(p_j)_{j\in[d_i]}\mapsto\phi=(\phi_j)_{j\in[d_i]}=((\phi_j(g))_{g\in G})_{j\in[d_i]},$$ where $\phi_j(g)$ is defined by $$\phi_j(g)=\begin{cases}hg^{-1}p_{\sigma_i(g)(k)},&\quad\text{ if }\;j=\sigma_i(h)(k),\;h\in G,\,k\in C_i\\p_j,&\quad\text{ if }\;j\in[d_i]\setminus\sigma_i(G)(C_i)\end{cases}.$$ Notice that here we have identified $X^{|G|}$ with $X^G$ for the indices' convenience.

The mapping $P_i:X^{d_i}\to(X^{|G|})^{d_i}$ is well-defined, because the mapping $$G\times C_i\to[d_i],\quad(g,j)\mapsto\sigma_i(g)(j)$$ is injective. Clearly, it is continuous. Since for any $j\in[d_i]$ there is some $g\in G$ such that $\phi_j(g)=p_j$, the mapping $P_i:X^{d_i}\to(X^{|G|})^{d_i}$ is distance-increasing with respect to the metric $\rho_\infty$ on $X^{d_i}$ and the metric $D_\infty$ on $(X^{|G|})^{d_i}$, i.e. $$\rho_\infty(p,p^\prime)\le D_\infty(P_i(p),P_i(p^\prime)),\quad\quad\forall p,p^\prime\in X^{d_i}.$$

We now claim that for all sufficiently large $i\in\mathbb{N}$ it will be true that $P_i(X^{d_i})$ is contained in $\Map(D,G,\delta,\sigma_i)$. This claim will be verified in a moment. Notice the fact that $d_i\to+\infty$ as $i\to\infty$. By the claim we will obtain that
\begin{align*}
\limsup_{i\to\infty}\frac{\Widim_\epsilon(\Map(D,G,\delta,\sigma_i),D_\infty)}{d_i}
&\ge\limsup_{i\to\infty}\frac{\Widim_\epsilon(P_i(X^{d_i}),D_\infty)}{d_i}\\
&\ge\limsup_{i\to\infty}\frac{\Widim_\epsilon(X^{d_i},\rho_\infty)}{d_i}\\
&=\lim_{n\to\infty}\frac{\Widim_\epsilon(X^n,\rho_\infty)}{n}.
\end{align*}
This will end the proof.

In what follows we prove that $P_i(X^{d_i})$ is contained in $\Map(D,G,\delta,\sigma_i)$, for all integers $i\ge i_0$. We fix an integer $i\ge i_0$. We take an arbitrary $p=(p_j)_{j\in[d_i]}\in X^{d_i}$. Let $\phi=P_i(p)$ and write $$\phi=(\phi_j)_{j\in[d_i]}=((\phi_j(g))_{g\in G})_{j\in[d_i]}\;\in\,(X^{|G|})^{d_i}=(X^{|G|})^{d_i}.$$ We note that $C_i\subset Q_i$. By the construction of the mapping $P_i:X^{d_i}\to(X^{|G|})^{d_i}$ we have obviously that if $j\in[d_i]$ satisfies $j=\sigma_i(h)(c)$ for some $h\in G$ and some $c\in C_i$ then for any $s\in G$ and any $g\in G$ $$s\phi_j(g)=shg^{-1}p_{\sigma_i(g)(c)}=\phi_{\sigma_i(sh)(c)}(g)=\phi_{\sigma_i(s)(\sigma_i(h)(c))}(g)=\phi_{\sigma_i(s)(j)}(g).$$ Thus, for any $j\in\sigma_i(G)(C_i)$ $$s\phi_j=\phi_{\sigma_i(s)(j)},\quad\quad\forall\,s\in G.$$ It follows that for all $s\in G$
\begin{align*}
D_2(s\phi,\phi\circ\sigma_i(s))&=\sqrt{\frac{1}{d_i}\cdot\sum_{j\in[d_i]}\left(D(\phi_{\sigma_i(s)(j)},s\phi_j)\right)^2}\\
&=\sqrt{\frac{1}{d_i}\cdot\sum_{j\in[d_i]\setminus\sigma_i(G)(C_i)}\left(D(\phi_{\sigma_i(s)(j)},s\phi_j)\right)^2}\\
&\le\sqrt{\frac{d_i-|\sigma_i(G)(C_i)|}{d_i}}\\
&\le\delta.
\end{align*}
This implies that $\phi\in\Map(D,G,\delta,\sigma_i)$. Thus, we conclude.
\end{proof}

\medskip

\section{Sofic mean dimension of full shifts}
In this section we prove Statement (ii) of Theorem \ref{main0}. Let us start with some general settings.

Let $K$ be a compact metrizable space. We take a compatible metric $D$ on the alphabet $K$ arbitrarily. Let $G$ be a sofic group. Let $\Sigma=\{\sigma_i:G\to\Sym(d_i)\}_{i\in\mathbb{N}}$ be a sofic approximation sequence for $G$. We are dealing with the shift action $\sigma_G$ of $G$ on the product space $K^G$.

We denote by $e$ the identity element of the group $G$. We fix a (countable) family $\{\alpha_g\}_{g\in G}$ of positive real numbers such that $$\alpha_e=1,\quad\quad\sum_{g\in G}\alpha_g<2.$$ We fix a metric $\rho$ on $K^G$ compatible with the product topology as follows: $$\rho(x,y)=\sum_{g\in G}\alpha_gD(x_g,y_g),\quad(x=(x_g)_{g\in G},y=(y_g)_{g\in G}\in K^G).$$ Since $K$ is a compact metrizable space, so is $K^G$. Thus, we may assume, without loss of generality, that the diameter of the product space $K^G$ with respect to the metric $\rho$ is equal to $1$.

Our goal is to prove the following equality: $$\mdim_\Sigma(K^G,\sigma_G)=\lim_{\epsilon\to0}\lim_{n\to\infty}\frac{\Widim_\epsilon(K^n,D_\infty)}{n}.$$ This will follow directly from Lemma \ref{estimate-ii-1} and Lemma \ref{estimate-ii-2}.

\begin{lemma}[Estimate from below]\label{estimate-ii-1}$$\mdim_\Sigma(K^G,\sigma_G)\ge\lim_{\epsilon\to0}\lim_{n\to\infty}\frac{\Widim_\epsilon(K^n,D_\infty)}{n}.$$\end{lemma}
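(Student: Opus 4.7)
My plan is to prove the lower bound by mirroring the structure of Lemma \ref{estimate-i-2}: for each sufficiently large $i$ I will construct a continuous, distance-increasing map $P_i\colon (K^{d_i},D_\infty)\to ((K^G)^{d_i},\rho_\infty)$ whose image is contained in $\Map(\rho,F,\delta,\sigma_i)$. Once such $P_i$ is in hand, monotonicity of $\Widim_\epsilon$ yields $\Widim_\epsilon(\Map(\rho,F,\delta,\sigma_i),\rho_\infty)\ge \Widim_\epsilon(K^{d_i},D_\infty)$; since $d_i\to\infty$ and $\Widim_\epsilon(K^n,D_\infty)/n$ converges (its numerator being subadditive in $n$), dividing by $d_i$, taking $\limsup_i$, and then $\epsilon\to 0$ finishes the proof.

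The natural candidate, modelled on the fact that a point of $K^G$ is just a $G$-indexed family of coordinates, is
\[
P_i\colon K^{d_i}\to (K^G)^{d_i},\qquad (p_j)_{j\in[d_i]}\mapsto \phi=(\phi_j)_{j\in[d_i]},\qquad (\phi_j)_g := p_{\sigma_i(g)(j)}.
\]
Continuity is obvious. Because $\sigma_i(e)$ is a permutation of $[d_i]$ and $\alpha_e=1$, the single-coordinate estimate $\rho(\phi_j,\phi'_j)\ge \alpha_e\, D((\phi_j)_e,(\phi'_j)_e)=D(p_{\sigma_i(e)(j)},p'_{\sigma_i(e)(j)})$ immediately gives $\rho_\infty(P_i(p),P_i(p'))\ge D_\infty(p,p')$, so $P_i$ is distance-increasing, as required.

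The key technical step is to verify that $P_i(K^{d_i})\subset \Map(\rho,F,\delta,\sigma_i)$ for all large $i$. Expanding the definitions,
\[
\rho\bigl(s\phi_j,\phi_{\sigma_i(s)(j)}\bigr)=\sum_{h\in G}\alpha_h\, D\bigl(p_{\sigma_i(hs)(j)},\,p_{\sigma_i(h)\sigma_i(s)(j)}\bigr),
\]
and I will split this sum at a finite truncation $H\subset G$ chosen so that $\sum_{h\notin H}\alpha_h$ is much smaller than $\delta$ (possible since $\sum_g\alpha_g<2$). For each pair $(h,s)\in H\times F$, the multiplicativity condition defining a sofic approximation forces $\sigma_i(hs)(j)=\sigma_i(h)\sigma_i(s)(j)$ outside a set of $j$'s of asymptotic density zero, so the $H$-part of the sum vanishes on a subset of $[d_i]$ of density $1-o(1)$. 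On this ``good'' set, $\rho(s\phi_j,\phi_{\sigma_i(s)(j)})$ is bounded by the tail $\sum_{h\notin H}\alpha_h$; on the complementary ``bad'' set I just use the uniform bound $\rho\le 1$ coming from the diameter-one normalisation. Averaging these two bounds via the $1/d_i$ factor in the definition of $\rho_2$ yields $\rho_2(s\phi,\phi\circ\sigma_i(s))\le \delta$ for every $s\in F$ and every large $i$, placing $\phi$ in $\Map(\rho,F,\delta,\sigma_i)$.

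The main obstacle, and the only place where the infinite/sofic character of $G$ genuinely complicates the analogue of Lemma \ref{estimate-i-2}, is the need to truncate the sum over $G$ \emph{before} invoking the sofic identities, which hold only per-element of $G$ rather than uniformly. The summability $\sum_g\alpha_g<2$ on the chosen product metric $\rho$ is precisely what makes this truncation feasible; once it is handled, the rest of the argument is structurally the same as the finite-group case, and in particular it places no finite-dimensionality hypothesis on $K$.
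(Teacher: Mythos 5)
Your proposal is correct and follows essentially the same route as the paper's own proof: the same pullback map $P_i$ (the paper sets $\phi_j(e)=p_j$ rather than $p_{\sigma_i(e)(j)}$, an immaterial difference since $\sigma_i(e)$ is a permutation), the same finite truncation of the product metric exploiting $\sum_{g\in G}\alpha_g<2$, and the same density argument combining sofic multiplicativity on a good set of indices with the diameter bound on its complement inside the $\rho_2$-average. Nothing further is needed.
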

\begin{proof}
We take $\epsilon>0$ arbitrarily. We take $\delta>0$ and a (nonempty) finite subset $F$ of $G$. We fix them in the proof. By definition it suffices to show that $$\limsup_{i\to\infty}\frac{\Widim_\epsilon(\Map(\rho,F,\delta,\sigma_i),\rho_\infty)}{d_i}\ge\lim_{n\to\infty}\frac{\Widim_\epsilon(K^n,D_\infty)}{n}.$$

For every $i\in\mathbb{N}$ we let $$P_i:K^{d_i}\to(K^G)^{d_i},\quad\quad p=(p_j)_{j\in[d_i]}\mapsto\phi=(\phi_j)_{j\in[d_i]}$$ (where $\phi_j$ has the form $\phi_j=(\phi_j(g))_{g\in G}\in K^G$, for each $j\in[d_i]$) be the mapping defined by $$\phi_j(g)=\begin{cases}p_{\sigma_i(g)(j)},&\;\text{ if }\,g\in G\setminus\{e\}\\p_j,&\;\text{ if }\,g=e\end{cases}.$$

Clearly, for any $i\in\mathbb{N}$ the mapping $P_i:K^{d_i}\to(K^G)^{d_i}$ is continuous. Moreover, it is distance-increasing with respect to the metric $D_\infty$ on $K^{d_i}$ and the metric $\rho_\infty$ on $(K^G)^{d_i}$, i.e., $$D_\infty(p,p^\prime)\le\rho_\infty(P_i(p),P_i(p^\prime)),\quad\forall p,p^\prime\in K^{d_i}.$$ We observe that for all sufficiently large $i\in\mathbb{N}$ it is true that $P_i(K^{d_i})$ is a subset of $\Map(\rho,F,\delta,\sigma_i)$. We will verify this observation in a moment. Assuming the observation and noting that $d_i\to+\infty$ as $i\to\infty$ we will deduce that
\begin{align*}
\limsup_{i\to\infty}\frac{\Widim_\epsilon(\Map(\rho,F,\delta,\sigma_i),\rho_\infty)}{d_i}
&\ge\limsup_{i\to\infty}\frac{\Widim_\epsilon(P_i(K^{d_i}),\rho_\infty)}{d_i}\\
&\ge\limsup_{i\to\infty}\frac{\Widim_\epsilon(K^{d_i},D_\infty)}{d_i}\\
&=\lim_{n\to\infty}\frac{\Widim_\epsilon(K^n,D_\infty)}{n}.
\end{align*}
This will finally end the proof.

As follows we verify that for all sufficiently large $i\in\mathbb{N}$ we have that $P_i(K^{d_i})$ is contained in $\Map(\rho,F,\delta,\sigma_i)$. We choose a (nonempty) finite subset $E$ of $G$, containing the identity element $e\in G$, such that if two points $\xi=(\xi_g)_{g\in G}$ and $\xi^\prime=(\xi^\prime_g)_{g\in G}$ in $K^G$ satisfy $\xi_g=\xi^\prime_g$ for all $g\in E$, then they satisfy $\rho(\xi,\xi^\prime)<\delta/2$. For every $i\in\mathbb{N}$ we put $$Q_i=\left\{j\in[d_i]:\sigma_i(s)\circ\sigma_i(t)(j)=\sigma_i(st)(j),\,\;\forall s\in E,\,\forall t\in F\cup\{e\}\right\}.$$ Since both $E$ and $F$ are finite subsets of $G$, there is some $k\in\mathbb{N}$ sufficiently large such that for any integer $i>k$ $$\frac{|Q_i|}{d_i}>1-\frac{3\delta^2}{8}.$$ Now we prove $$P_i(K^{d_i})\subset\Map(\rho,F,\delta,\sigma_i),\quad\,\;\forall i>k.$$ We fix an integer $i>k$. We take $p\in K^{d_i}$ arbitrarily. Let $\phi=P_i(p)$. We write $$p=(p_j)_{j\in[d_i]}\in K^{d_i},\quad\;\quad\,\phi=((\phi_j(g))_{g\in G})_{j\in[d_i]}\in(K^G)^{d_i}.$$ It is clear that for any $j\in Q_i$ we have $\sigma_i(e)(j)=j$. Therefore, $$\phi_j(g)=p_{\sigma_i(g)(j)},\quad\quad\forall g\in G,\quad\forall j\in Q_i.$$ It follows that for any $j\in Q_i\cap(\sigma_i(t))^{-1}(Q_i)$ and any $t\in F\cup\{e\}$ $$t\phi_j(s)=\phi_j(st)=p_{\sigma_i(st)(j)}=p_{\sigma_i(s)\circ\sigma_i(t)(j)}=\phi_{\sigma_i(t)(j)}(s),\,\quad\forall s\in E.$$ By the choice of $E$ we have $$\rho(t\phi_j,\phi_{\sigma_i(t)(j)})<\delta/2,\quad\quad\forall t\in F,\quad\forall j\in Q_i\cap(\sigma_i(t))^{-1}(Q_i).$$ Thus, we conclude that for all $t\in F$
\begin{align*}
\rho_2(\phi\circ\sigma_i(t),t\phi)&=\sqrt{\frac{1}{d_i}\cdot\sum_{j\in[d_i]}\left(\rho(\phi_{\sigma_i(t)(j)},t\phi_j)\right)^2}\\
&\le\sqrt{\frac{\delta^2}{4}+\frac{\left|[d_i]\setminus(Q_i\cap(\sigma_i(t))^{-1}(Q_i))\right|}{d_i}}\\
&\le\sqrt{\frac{\delta^2}{4}+2\cdot\left(1-\frac{|Q_i|}{d_i}\right)}\\
&\le\delta.
\end{align*}
This implies that $\phi\in\Map(\rho,F,\delta,\sigma_i)$. The statement follows.
\end{proof}

\begin{lemma}[Estimate from above]\label{estimate-ii-2}$$\mdim_\Sigma(K^G,\sigma_G)\le\lim_{\epsilon\to0}\lim_{n\to\infty}\frac{\Widim_\epsilon(K^n,D_\infty)}{n}.$$\end{lemma}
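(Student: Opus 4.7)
The plan is to mirror the cone-embedding technique of Lemma~\ref{estimate-i-1} in the shift-action setting. The essential observation is that for any $\phi=(\phi_j)_{j\in[d_i]}\in\Map(\rho,F,\delta,\sigma_i)$, the identity $(s\phi_j)(e)=\phi_j(s)$, the normalization $\alpha_e=1$, and the map condition $\rho_2(\phi\circ\sigma_i(s),s\phi)\le\delta$ yield
\[
\frac{1}{d_i}\sum_{j\in[d_i]}D\bigl(\phi_j(s),\phi_{\sigma_i(s)(j)}(e)\bigr)^2\le\delta^2,\qquad\forall\,s\in F.
\]
Hence, outside a $\delta$-fraction of $j$'s, the value $\phi_j(s)\in K$ is within $\sqrt{\delta}$ of the scalar $\phi_{\sigma_i(s)(j)}(e)$. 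Up to truncating $K^G$ to a finite window $E\subset G$ and recording corrections at the bad $(j,s)$ pairs, the array $\phi$ is therefore encoded by the single vector $R(\phi):=(\phi_j(e))_{j\in[d_i]}\in K^{d_i}$, which is precisely what will produce the factor $\Widim_\eta(K^{d_i},D_\infty)$ needed in the final limit.

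Fix $\epsilon,\theta>0$. Choose a finite $E\subset G$ with $e\in E$ and $\sum_{g\notin E}\alpha_g$ so small that its contribution to any $\rho$-distance is $<\epsilon/6$, set $F=E$, and pick $\eta,\delta>0$ with $2(2\sqrt{\delta}+\eta)<5\epsilon/6$ and $|E|\cdot\delta\cdot(1+\Widim_\eta(K,D))<\theta$; compactness of $K$ ensures $\Widim_\eta(K,D)<\infty$. Let $f_{d_i}:K^{d_i}\to P_{d_i}$ be an $\eta$-embedding with $\dim(P_{d_i})=\Widim_\eta(K^{d_i},D_\infty)$, let $f_1:K\to Y_1$ be an $\eta$-embedding with $\dim(Y_1)=\Widim_\eta(K,D)$, and let $CY_1$ denote the cone over $Y_1$ with vertex $\ast$ (as in the proof of Lemma~\ref{estimate-i-1}). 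For $(j,g)\in[d_i]\times E$ set
\[
J_{j,g}(\phi)=\max\!\bigl(D(\phi_j(g),\phi_{\sigma_i(g)(j)}(e))-\sqrt{\delta},\,0\bigr),
\]
and define the continuous map
\[
H_i:\Map(\rho,E,\delta,\sigma_i)\to P_{d_i}\times(CY_1)^{|E|\cdot d_i},\quad\phi\mapsto\Bigl(f_{d_i}(R(\phi)),\;\bigl(J_{j,g}(\phi)\,f_1(\phi_j(g))\bigr)_{j,g}\Bigr).
\]

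To check that $H_i$ is an $\epsilon$-embedding for $\rho_\infty$, assume $H_i(\phi)=H_i(\psi)$. The first factor gives $D(\phi_j(e),\psi_j(e))<\eta$ for every $j$. In each cone coordinate, either both defects $J_{j,g}(\phi),J_{j,g}(\psi)$ vanish, and the triangle inequality through $\phi_{\sigma_i(g)(j)}(e)$ and $\psi_{\sigma_i(g)(j)}(e)$ gives $D(\phi_j(g),\psi_j(g))\le 2\sqrt{\delta}+\eta$, or the cone identification forces $f_1(\phi_j(g))=f_1(\psi_j(g))$ and hence $D(\phi_j(g),\psi_j(g))<\eta$. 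Combined with the bound on the tail outside $E$, this gives $\rho(\phi_j,\psi_j)<2(2\sqrt{\delta}+\eta)+\epsilon/6<\epsilon$ for every $j$. A Markov estimate on the first-paragraph bound yields $|\{j:J_{j,g}(\phi)>0\}|\le\delta d_i$ for each $g\in E$, so $H_i(\Map(\rho,E,\delta,\sigma_i))$ is contained in a finite union of subspaces of dimension at most $\Widim_\eta(K^{d_i},D_\infty)+|E|\cdot\delta\cdot d_i\cdot(1+\Widim_\eta(K,D))$. Dividing by $d_i$ and using subadditivity of $n\mapsto\Widim_\eta(K^n,D_\infty)$, we conclude
\[
\limsup_{i\to\infty}\frac{\Widim_\epsilon(\Map(\rho,E,\delta,\sigma_i),\rho_\infty)}{d_i}\le\lim_{n\to\infty}\frac{\Widim_\eta(K^n,D_\infty)}{n}+\theta,
\]
and sending $\theta,\delta,\eta\to 0$ and finally $\epsilon\to 0$ yields the claim.

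The main obstacle compared with Lemma~\ref{estimate-i-1} is that $G$ can be infinite, so one cannot exhaust $K^G$ by a uniform $|G|$-window on $[d_i]$. This is bypassed by truncating to a finite $E\subset G$ and absorbing the tail into $\epsilon/6$ via summability of $\{\alpha_g\}$. The quantitative point that keeps the argument robust to infinite-dimensional alphabets is that $\Widim_\eta(K,D)$ remains finite for each fixed $\eta>0$ by compactness of $K$; this is exactly what lets the cone-correction term $|E|\cdot\delta\cdot(1+\Widim_\eta(K,D))$ be driven below $\theta$ by shrinking $\delta$, with no finite-dimensionality hypothesis on $K$ entering anywhere.
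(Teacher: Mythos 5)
Your proposal is correct and follows essentially the same route as the paper's proof: embed the $e$-slice $(\phi_j(e))_{j\in[d_i]}$ of the array via a $\Widim$-optimal map of $K^{d_i}$, record the few shift-inconsistent positions through cone coordinates governed by defect functions, and bound the number of such positions by a Chebyshev count coming from the $\rho_2$-condition in the definition of $\Map$. The only differences are bookkeeping ones --- you use per-pair defects $J_{j,g}$ measured in $D$ rather than the paper's single $J_j$ measured in $\rho$ over $F_0$, and you target an $\epsilon$-embedding directly instead of a $3\epsilon$-embedding --- neither of which changes the argument.
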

\begin{proof}
We take $\epsilon>0$ and $\eta>0$ arbitrarily and fix them in the proof. By definition it suffices to prove that there exist a finite (and nonempty) subset $F_0$ of $G$ and some $\delta_0>0$ such that $$\limsup_{i\to\infty}\frac{\Widim_{3\epsilon}\left(\Map(\rho,F_0,\delta_0,\sigma_i),\rho_\infty\right)}{d_i}\le\eta+\lim_{n\to\infty}\frac{\Widim_\epsilon(K^n,D_\infty)}{n}.$$

To this aim, for any $n\in\mathbb{N}$ we take a compact metrizable space $A_n$ satisfying $$\dim(A_n)=\Widim_\epsilon(K^n,D_\infty)$$ in company with a continuous mapping $f_n:K^n\to A_n$ which is an $\epsilon$-embedding with respect to the metric $D_\infty$ on the product space $K^n$. Since $K$ is compact (and nonempty), the term $\Widim_\epsilon(K^n,D_\infty)$ is always finite.

We choose a finite subset $F_0$ of $G$, containing the identity element $e$, such that if two points $\xi=(\xi(g))_{g\in G}$ and $\xi^\prime=(\xi^\prime(g))_{g\in G}$ coming from $K^G$ satisfy $D(\xi(s),\xi^\prime(s))\le2\epsilon$ for all $s\in F_0$, then they satisfy $\rho(\xi,\xi^\prime)<3\epsilon$.

We pick $\delta_0>0$ sufficiently small such that $$\delta_0<\min\left\{\frac{\eta}{(1+\Widim_\epsilon(K,D))\cdot|F_0|^2},\;\frac{\epsilon^2}{4}\right\}.$$

So our main task now is to estimate $\Widim_{3\epsilon}(\Map(\rho,F_0,\delta_0,\sigma_i),\rho_\infty)$ from above, for all sufficiently large $i\in\mathbb{N}$.

We fix an arbitrary $i\in\mathbb{N}$ for the moment.

For every $j\in[d_i]$ we define a mapping $$J_j:\Map(\rho,F_0,\delta_0,\sigma_i)\to[0,1]$$ by sending $$\phi=(\phi_j)_{j\in[d_i]}\in\Map(\rho,F_0,\delta_0,\sigma_i)\subset(K^G)^{d_i}$$ to $$J_j(\phi)=\max\left\{\max_{s\in F_0}\left(\rho(s\phi_j,\phi_{\sigma_i(s)(j)})-\sqrt{\delta_0}\right),0\right\},$$ where each $\phi_j$ ($j\in[d_i]$) has the form $\phi_j=(\phi_j(g))_{g\in G}\in K^G$. Clearly, all the mappings $J_j:\Map(\rho,F_0,\delta_0,\sigma_i)\to[0,1]$ ($j\in[d_i]$) are continuous.

Let $CA_1=([0,1]\times A_1)/\sim$ be the cone generated by $A_1$, where $(0,a)\sim(0,a^\prime)$ for all $a,a^\prime\in A_1$. We denote by $\lambda a$ the equivalence class of $(\lambda,a)\in[0,1]\times A_1$. We denote the vertex of the cone $CA_1$ by the symbol $\ast$ (namely, we set $\ast=0a$, for any $a\in A_1$). We notice that the following inequality is clear: $$\Widim_\epsilon(K,D)=\dim(A_1)\le\dim(CA_1)\le1+\dim(A_1)=1+\Widim_\epsilon(K,D).$$

We construct a mapping $H_i$ as follows: $$H_i:\Map(\rho,F_0,\delta_0,\sigma_i)\to A_{d_i}\times(CA_1)^{|F_0|\cdot d_i},$$$$\phi=(\phi_j)_{j\in[d_i]}\,\mapsto\,\left(f_{d_i}((\phi_j(e))_{j\in[d_i]}),\;((J_j(\phi)f_1(\phi_j(s)))_{s\in F_0})_{j\in[d_i]}\right).$$ Obviously, the mapping $H_i:\Map(\rho,F_0,\delta_0,\sigma_i)\to A_{d_i}\times(CA_1)^{|F_0|\cdot d_i}$ is continuous (because each mapping $J_j:\Map(\rho,F_0,\delta_0,\sigma_i)\to[0,1]$, where $j\in[d_i]$, is continuous).

We claim that the mapping $H_i:\Map(\rho,F_0,\delta_0,\sigma_i)\to A_{d_i}\times(CA_1)^{|F_0|\cdot d_i}$ is a $(3\epsilon)$-embedding with respect to $\rho_\infty$. To verify this claim, we take two points $\varphi=(\varphi_j)_{j\in[d_i]}$ and $\psi=(\psi_j)_{j\in[d_i]}$ arbitrarily within $\Map(\rho,F_0,\delta_0,\sigma_i)$. We assume that $H_i(\varphi)=H_i(\psi)$. What we then need to show is that $\rho_\infty(\varphi,\psi)<3\epsilon$. In fact, it follows from the assumption $H_i(\varphi)=H_i(\psi)$ that $$f_{d_i}((\varphi_j(e))_{j\in[d_i]})=f_{d_i}((\psi_j(e))_{j\in[d_i]});$$$$(J_j(\varphi)f_1(\varphi_j(s)))_{s\in F_0}=(J_j(\psi)f_1(\psi_j(s)))_{s\in F_0},\quad\quad\forall j\in[d_i].$$ Since the continuous mapping $f_{d_i}:K^{d_i}\to A_{d_i}$ is an $\epsilon$-embedding with respect to the distance $D_\infty$, the former implies that $D_\infty((\varphi_j(e))_{j\in[d_i]},(\psi_j(e))_{j\in[d_i]})<\epsilon$, i.e. $$D(\varphi_j(e),\psi_j(e))<\epsilon,\quad\quad\forall j\in[d_i].$$ The latter sorts all those $j\in[d_i]$ into two cases:
\begin{itemize}
\item either $(J_j(\varphi)f_1(\varphi_j(s)))_{s\in F_0}=(J_j(\psi)f_1(\psi_j(s)))_{s\in F_0}=(\ast,\ast,\dots,\ast)$;
\item or $(J_j(\varphi)f_1(\varphi_j(s)))_{s\in F_0}=(J_j(\psi)f_1(\psi_j(s)))_{s\in F_0}\in(CA_1\setminus\{\ast\})^{|F_0|}$.
\end{itemize}
If some $j\in[d_i]$ encounters the first case, then we have $J_j(\varphi)=J_j(\psi)=0$ which means that $$\rho(s\varphi_j,\varphi_{\sigma_i(s)(j)})\le\sqrt{\delta_0},\quad\rho(s\psi_j,\psi_{\sigma_i(s)(j)})\le\sqrt{\delta_0},\quad\quad\forall s\in F_0.$$ This implies that $$D(s\varphi_j(e),\varphi_{\sigma_i(s)(j)}(e))\le\sqrt{\delta_0},\quad D(s\psi_j(e),\psi_{\sigma_i(s)(j)}(e))\le\sqrt{\delta_0},\quad\quad\forall s\in F_0.$$ Since $s\varphi_j(e)=\varphi_j(s)$ and $s\psi_j(e)=\psi_j(s)$, we have $$D(\varphi_j(s),\varphi_{\sigma_i(s)(j)}(e))\le\sqrt{\delta_0},\quad\quad D(\psi_j(s),\psi_{\sigma_i(s)(j)}(e))\le\sqrt{\delta_0},\quad\quad\forall s\in F_0.$$ Hence, by noting that $D(\varphi_l(e),\psi_l(e))<\epsilon$ for all $l\in[d_i]$, we deduce that $$D(\varphi_j(s),\psi_j(s))\le2\sqrt{\delta_0}+\epsilon<2\epsilon,\quad\quad\forall s\in F_0.$$ Thus, by the choice of $F_0$ we obtain that $$\rho(\varphi_j,\psi_j)<3\epsilon.$$ If any $j\in[d_i]$ encounters the second case, then it follows directly from $J_j(\varphi)=J_j(\psi)>0$ that $(f_1(\varphi_j(s)))_{s\in F_0}=(f_1(\psi_j(s)))_{s\in F_0}$. This implies that $D(\varphi_j(s),\psi_j(s))<\epsilon$, for all $s\in F_0$. By the choice of $F_0$ we also have in this case that $\rho(\varphi_j,\psi_j)<3\epsilon$. We therefore conclude that $\rho_\infty(\varphi,\psi)<3\epsilon$. This proves the claim.

Here the point is to generate a cone from the image of a space $K$ under some $\epsilon$-embedding mapping (\textit{not} from the space $K$ itself). As follows is a slightly more detailed explanation in connection with the construction of the mapping $H_i$. We take an arbitrary $\phi\in\Map(\rho,F_0,\delta_0,\sigma_i)$. Intuitively speaking, for a given $j\in[d_i]$ we say that $\phi_j$ is \textit{good} if $J_j(\phi)=0$, because in this case the family $\{\phi_l(e):l\in[d_i]\}$ carries the needed information fairly well about $\phi_j$ (i.e. it has been already with nearly all the information sufficiently close to $\{\phi_j(s):s\in F_0\}$); while we say that $\phi_j$ is \textit{bad} if $J_j(\phi)>0$, and in this case, the information carried by the family $\{\phi_l(e):l\in[d_i]\}$ is then not able to recover $\phi_j$ (i.e. is not so close to $\phi_j$ as $\{\phi_j(s):s\in F_0\}$) unless we additionally record almost the whole $\phi_j$ (i.e. simply choose to pick out $\{\phi_j(s):s\in F_0\}$ entirely so as to admit some $3\epsilon$-embedding). When we look at each fixed $j\in[d_i]$, $\phi_j$ is always possible to lie in any of these two regions as $\phi\in\Map(\rho,F_0,\delta_0,\sigma_i)$ changes. However, for any fixed $j\in[d_i]$ we \textit{cannot} expect $\phi_j$ to \textit{move in a continuous way} between good and bad regions as $\phi$ moves continuously within $\Map(\rho,F_0,\delta_0,\sigma_i)$. In relation to $\{\phi_l(e):l\in[d_i]\}$ (which gives information properly only to the good region), it is considered suitable for the size of dimension. But the rectangle $\{\phi_k(s):s\in F_0,k\in[d_i]\}$ (that works on both of those two regions) turns out to be considerably large (namely, the dimension of the space $A_1^{|F_0|\cdot d_i}$ is far from satisfactory). The reason why we need build \textit{the} cone in the construction of $H_i$ is that this can record $\phi_j$ (for every $j\in[d_i]$) continuously (with respect to $\phi\in\Map(\rho,F_0,\delta_0,\sigma_i)$) in a more flexible and productive approach.

Now by this claim we get an upper bound for $\Widim_{3\epsilon}(\Map(\rho,F_0,\delta_0,\sigma_i),\rho_\infty)$: $$\Widim_{3\epsilon}(\Map(\rho,F_0,\delta_0,\sigma_i),\rho_\infty)\le\dim(H_i(\Map(\rho,F_0,\delta_0,\sigma_i))).$$ To estimate the term $\dim(H_i(\Map(\rho,F_0,\delta_0,\sigma_i)))$ from above, we need deal with a subset of $[d_i]$, for any $\phi=(\phi_j)_{j\in[d_i]}$ coming from $\Map(\rho,F_0,\delta_0,\sigma_i)$: $$\Omega_\phi(\rho,F_0,\delta_0,\sigma_i)=[d_i]\setminus\{j\in[d_i]:\rho(s\phi_j,\phi_{\sigma_i(s)(j)})\le\sqrt{\delta_0},\;\,\forall s\in F_0\}.$$ For any $\phi=(\phi_j)_{j\in[d_i]}\in\Map(\rho,F_0,\delta_0,\sigma_i)$ we have $$\sum_{j\in[d_i]}\rho(s\phi_j,\phi_{\sigma_i(s)(j)})^2\le\delta_0^2\cdot d_i,\quad\forall\,s\in F_0.$$ It follows that $$\delta_0\cdot|\Omega_\phi(\rho,F_0,\delta_0,\sigma_i)|\le\sum_{j\in[d_i]}\sum_{s\in F_0}\rho(s\phi_j,\phi_{\sigma_i(s)(j)})^2\le|F_0|\cdot\delta_0^2\cdot d_i$$ and hence $$|\Omega_\phi(\rho,F_0,\delta_0,\sigma_i)|\le|F_0|\cdot\delta_0\cdot d_i.$$ This implies that for any given $\phi\in\Map(\rho,F_0,\delta_0,\sigma_i)$ the image $H_i(\phi)\in A_{d_i}\times(CA_1)^{|F_0|\cdot d_i}$ has entries at least $|F_0|\cdot(d_i-|F_0|\cdot\delta_0\cdot d_i)$, which have to take the value $\ast$. More precisely, $H_i(\Map(\rho,F_0,\delta_0,\sigma_i))$ is contained in $$\bigcup_{l\in\mathbb{Z},\;\,0\le l\le|F_0|\cdot\delta_0\cdot d_i}\,\;\bigcup_{k_1,\dots,k_{d_i}\in\{0,1\},\;\,k_1+\cdots+k_{d_i}=l}\,\;A_{d_i}\times\prod_{j=1}^{d_i}(\{\ast\}^{|F_0|})^{1-k_j}\times((CA_1)^{|F_0|})^{k_j},$$ where we set $$\{\ast\}^{|F_0|}\times((CA_1)^{|F_0|})^0=\{\ast\}^{|F_0|},\quad\quad(\{\ast\}^{|F_0|})^0\times(CA_1)^{|F_0|}=(CA_1)^{|F_0|}.$$ Thus, we deduce that $$\dim(H_i(\Map(\rho,F_0,\delta_0,\sigma_i)))\le\dim(A_{d_i})+|F_0|^2\cdot\delta_0\cdot d_i\cdot\dim(CA_1).$$ It follows that
\begin{align*}
&\limsup_{i\to\infty}\frac{\Widim_{3\epsilon}\left(\Map(\rho,F_0,\delta_0,\sigma_i),\rho_\infty\right)}{d_i}\\
&\quad\quad\le\limsup_{i\to\infty}\frac{\dim(H_i(\Map(\rho,F_0,\delta_0,\sigma_i)))}{d_i}\\
&\quad\quad\le\limsup_{i\to\infty}\frac{\dim(A_{d_i})+|F_0|^2\cdot\delta_0\cdot d_i\cdot\dim(CA_1)}{d_i}\\
&\quad\quad\le\limsup_{i\to\infty}\frac{\Widim_\epsilon(K^{d_i},D_\infty)}{d_i}+|F_0|^2\cdot\delta_0\cdot(1+\Widim_\epsilon(K,D))\\
&\quad\quad\le\limsup_{i\to\infty}\frac{\Widim_\epsilon(K^{d_i},D_\infty)}{d_i}+\eta.
\end{align*}
Since $d_i\to\infty$ as $i\to\infty$, we conclude that $$\limsup_{i\to\infty}\frac{\Widim_{3\epsilon}\left(\Map(\rho,F_0,\delta_0,\sigma_i),\rho_\infty\right)}{d_i}\le\eta+\lim_{n\to\infty}\frac{\Widim_\epsilon(K^n,D_\infty)}{n}.$$ This is as desired.
\end{proof}

\medskip

\section*{Appendix A}
The definition of \textit{sofic mean dimension} was originally introduced by Li \cite{Li} (using open covers). However, we notice that the definition which we stated in the body of this paper (via $\epsilon$-width dimension) differs from Li's. This appendix is devoted to clarifying that these two definitions for sofic mean dimension are equivalent to each other.

Let $Y$ be a compact space and $\mathcal{U}$ a finite open cover of $Y$. We denote $$\mathrm{ord}(\mathcal{U})=\max_{y\in Y}\sum_{U\in\mathcal{U}}1_U(y)-1,\quad\quad\mathcal{D}(\mathcal{U})=\min_{\mathcal{V}\succ\mathcal{U}}\mathrm{ord}(\mathcal{V}),$$ where $1_U(y)$ takes the value $1$ (resp. $0$) if $y\in U$ (resp. $y\notin U$) and where $\mathcal{V}\succ\mathcal{U}$ means that $\mathcal{V}$ ranges over finite open covers of $Y$ refining $\mathcal{U}$ (i.e. every element of $\mathcal{V}$ is contained in some element of $\mathcal{U}$). For $d\in\mathbb{N}$ we denote by $\mathcal{U}^d$ the finite open cover of $Y^d$ consisting of all those open sets in $Y^d$ of the form $U_1\times\cdots\times U_d$ (where $U_1,\dots,U_d\in\mathcal{U}$).

Let a sofic group $G$ act continuously on a compact metrizable space $X$. Let $\Sigma=\{\sigma_i:G\to\Sym(d_i)\}_{i\in\mathbb{N}}$ be a sofic approximation sequence for $G$. We fix a compatible metric $\rho$ on $X$. For a finite open cover $\mathcal{U}$ of $X$, a nonempty finite subset $F$ of $G$, $\delta>0$, and $i\in\mathbb{N}$ we denote $\mathcal{D}(\mathcal{U}^{d_i}\cap\Map(\rho,F,\delta,\sigma_i))$ by $\mathcal{D}(\mathcal{U},\rho,F,\delta,\sigma_i)$. Recall that we have already given the definition of $\mdim_\Sigma(X,G)$ in Section 2. Now what we want to see is $$\mdim_\Sigma(X,G)=\sup_{\mathcal{U}}\inf_{F\subset G\text{ finite, }\,\delta>0}\limsup_{i\to\infty}\frac{\mathcal{D}(\mathcal{U},\rho,F,\delta,\sigma_i)}{d_i},$$ where $\mathcal{U}$ runs over finite open covers of $X$.

In fact, this can be shown with a simple and standard argument. The key point is to consider a Lebesgue number of a finite open cover $\mathcal{U}$ with respect to $\rho$, and to use the following fact \cite{LW}: For a finite open cover $\mathcal{U}$ of a compact metrizable space $X$, $\mathcal{D}(\mathcal{U})\le n$ (where $n$ is a nonnegative integer) exactly when there is a compact metrizable space $P$ of topological dimension $n$, which admits a continuous mapping $f:X\to P$ satisfying that there exists a finite open cover $\mathcal{V}$ of $P$ such that $f^{-1}(\mathcal{V})$ refines $\mathcal{U}$. Lastly, we note that although a compatible metric $\rho$ on the space $X$ is also involved in the right-hand side of the above equality, it can be shown that the value defined by the right-hand side of this equality is independent of the choice of compatible metrics $\rho$ on $X$ (for details please refer to \cite[Section 2]{Li}).

\medskip

\section*{Appendix B}
This appendix contains a dichotomy in dimension theory, which leads immediately to an alternative to the expression for sofic mean dimension of double finite actions and full shifts over finite-dimensional alphabets (that is more explicit in some sense, depending upon the reader's preference). For details please refer to \cite{Dranishnikov,Tsukamoto}.

Recall that $\dim(K)$ denotes the Lebesgue covering dimension of a topological space $K$ (with the convention that defines it to be $-\infty$ provided the space is empty). For an arbitrary finite-dimensional compact metrizable space $K$ it was classically known that $2\cdot\dim(K)-1\le\dim(K\times K)\le2\cdot\dim(K)$. Since $\dim(K)$ must be a nonnegative integer, we have either $\dim(K\times K)=2\cdot\dim(K)$ or $\dim(K\times K)=2\cdot\dim(K)-1$. Further, for every $n\in\mathbb{N}$$$\dim(K^n)=\begin{cases}n\cdot\dim(K),&\text{if $K$ satisfies $\dim(K\times K)=2\cdot\dim(K)$}\\n\cdot\dim(K)-n+1,&\text{otherwise}\end{cases}.$$ As a direct consequence, we have $$\inf_{n\in\mathbb{N}}\frac{\dim(K^n)}{n}=\begin{cases}\dim(K),&\text{if $K$ satisfies $\dim(K\times K)=2\cdot\dim(K)$}\\\dim(K)-1,&\text{otherwise}\end{cases}$$ for any finite-dimensional compact metrizable space $K$. This term appears in the quantitative main results of our present paper.

We remark that (by definition) if a finite group $G$ acts continuously on a compact metrizable space $X$ then its mean dimension is equal to $\dim(X)/|G|$. Hence, from the above alternative we get a corollary:
\begin{itemize}\item[]
If a finite group $G$ acts continuously on a finite-dimensional compact metrizable space $X$ and if $\Sigma$ is a sofic approximation sequence for $G$, then $\mdim_\Sigma(X,G)$ agrees with the mean dimension of $(X,G)$ exactly when $\dim(X\times X)=2\cdot\dim(X)$.
\end{itemize}
Moreover, as mentioned in the introduction, the relation between sofic mean dimension and mean dimension was shown by Li \cite[Section 3]{Li} as follows: If an infinite amenable group $G$ acts continuously on a compact metrizable space $X$ and if $\Sigma$ is a sofic approximation sequence for $G$, then $\mdim_\Sigma(X,G)$ coincides with the mean dimension of $(X,G)$. So, combining all these facts and results we see that the mean dimension of the full shift $(K^G,\sigma_G)$, where the alphabet $K$ is finite-dimensional and where the group $G$ is amenable, is equal to $\dim(K)$ (exactly when $K$ satisfies $\dim(K\times K)=2\cdot\dim(K)$), $\dim(K)-1$ (exactly when $G$ is infinite and $K$ satisfies $\dim(K\times K)=2\cdot\dim(K)-1$), or $\dim(K)-1+1/|G|$ (exactly when $G$ is finite and $K$ satisfies $\dim(K\times K)=2\cdot\dim(K)-1$).

\medskip

\medskip

\end{document}